\documentclass[12pt]{article}

\usepackage[margin=1in]{geometry}
\usepackage{amsmath,amsthm,amssymb,bbm,color,enumitem,esint,esvect,float,graphicx,mathrsfs, esint, mathtools, url}

\usepackage{xcolor}

\usepackage{tikz}
\usetikzlibrary{calc}
\usepackage{hyperref,mathtools}

\usepackage{amsxtra}
\usepackage{comment}
\usepackage{amsthm,mathrsfs}
\usepackage{amsmath,amsthm,amssymb,amsrefs,bbm,color,enumitem,esint,esvect,float,graphicx,mathrsfs}

\newtheorem{theorem}{Theorem}[section]
\newtheorem{lemma}[theorem]{Lemma}

\theoremstyle{definition}
\newtheorem{definition}[theorem]{Definition}

\newtheorem{proposition}[theorem]{Proposition}
\newtheorem{corollary}[theorem]{Corollary}

\theoremstyle{remark}
\newtheorem{remark}[theorem]{Remark}

\numberwithin{equation}{section}

\usepackage{epstopdf}
\AppendGraphicsExtensions{.tif}

 \usepackage{authblk}

\title{Notes on Bilinear Bessel Potentials}

\author{Ana Čolović \qquad Xinyu Gao\\University of Missouri}

\newcommand{\Addresses}{{
  \bigskip
  \footnotesize

  A.~Čolović, \textsc{Department of Mathematics, University of Missouri,
    Columbia, Missouri 65201}\par\nopagebreak
  \textit{E-mail address}, A.~Čolović: \texttt{acg7y@umsystem.edu}

  \medskip

  X.~Gao ,  \textsc{Department of Mathematics, University of Missouri,
    Columbia, Missouri 65201}\par\nopagebreak
  \textit{E-mail address}, X.~Gao: \texttt{xggh8@umsystem.edu}
}}

\begin{document}

\maketitle

\begin{abstract}
In analogy with bilinear Riesz potentials, we introduce bilinear Bessel potentials and characterize their boundedness from $L^p\times L^q$
 into Lebesgue and Lorentz spaces $L^{r,\alpha}.$ In several cases we identify the optimal Lorentz indices by constructing explicit counterexamples.
\end{abstract}

\section{Introduction}

Fractional integral operators are fundamental objects in harmonic analysis,
closely related to behavior of functions in Sobolev spaces. In the linear setting,
the mapping properties of Riesz and Bessel fractional integral operators, otherwise known as Riesz and Bessel potentials have long been established and appear, for example, in \cite{Ste}. Bilinear Riesz potential was introduced by Grafakos in \cite{GraSt}. Bilinear fractional operators exhibit new structural phenomena, including subtle endpoint behavior, first observed in the works of Grafakos–Kalton (in \cite{GraKal}) and Kenig and Stein (in \cite{KenSte}).

Motivated by the theory of bilinear Riesz potentials we introduce and study \emph{bilinear Bessel potentials}. Let $n\in \mathbb{N},$ where $n\geq 1,$ and let $0<s<n$.
We let $G_s$ denote the Bessel kernel whose Fourier transform satisfies
\[
\widehat{G_s}(\xi)=(1+4\pi^2|\xi|^2)^{-s/2}.
\]
The associated linear Bessel potential of order $s$ is given by
\[
J_s f(x)
=
(G_s * f)(x)
=
\int_{\mathbb R^n} G_s(x-y)\,f(y)\,dy,
\]
while the bilinear Bessel potential is defined by
\[
\mathcal J_s(f,g)(x)
=
\int_{\mathbb R^n} G_s(y)\,f(x-y)\,g(x+y)\,dy.
\]
These operators may be viewed as bilinear analogues of the classical Bessel
potential, interpolating between bilinear convolution operators with
integrable kernels and bilinear fractional integrals. 

The main goal of this paper is to characterize the boundedness of
$\mathcal J_s$ from
$L^p(\mathbb R^n)\times L^q(\mathbb R^n)$ into Lebesgue and Lorentz spaces.
Using scaling considerations and explicit counterexamples, we prove that
boundedness can only occur when
\[
\frac1p+\frac1q-\frac{s}{n}\le \frac1r\le \frac1p+\frac1q,
\]
and that this range is sharp up to certain endpoint phenomena. In particular,
the critical line $\frac1p+\frac1q=\frac{s}{n}$ fails for mapping into
$L^\infty$, as shown in Section~\ref{prop:fail-critical-Linfty}. Within this region, we establish strong and
weak-type bounds, including Lorentz space refinements obtained via restricted
weak-type interpolation. In several cases, we prove that the Lorentz indices
are optimal by constructing concrete counterexamples. We refer to exponents satisfying
$\frac1r=\frac1p+\frac1q$
as the \emph{Lebesgue indices}, and those satisfying
$\frac1r=\frac1p+\frac1q-\frac{s}{n}$
as the \emph{fractional indices}.

\section{Preliminaries and Background}
In this section, we recall necessary preliminaries.

\subsection{Lorentz spaces and tools}
We work with Lorentz spaces, are defined as follows (see \cite{Gra}, Section~1.4.2).
\begin{definition}[Lorentz spaces]
Let $(X,\mu)$ be a measure space. For a measurable function $f$ on $X$, define
\[
d_f(\alpha)=\mu(\{|f|>\alpha\}), \qquad
f^*(t)=\inf\{\alpha>0:\ d_f(\alpha)\le t\}.
\]
For $0<p<\infty$ and $0<q\le\infty$, the Lorentz space $L^{p,q}(X,\mu)$ consists of
all measurable functions $f$ such that
\[
\|f\|_{L^{p,q}}<\infty,
\]
where
\[
\|f\|_{L^{p,q}}
=
\begin{cases}
\left(\displaystyle\int_0^\infty
(t^{1/p}f^*(t))^q\,\dfrac{dt}{t}\right)^{1/q}, & 0<q<\infty,\\[1em]
\displaystyle\sup_{t>0} t^{1/p}f^*(t), & q=\infty.
\end{cases}
\]
\end{definition}

We also use the following proposition, which appears as Proposition~1.4.9 on p.~53 of \cite{Gra}. 
\begin{proposition} 
    For $0<p<\infty,$ $0<q\leq \infty,$ the following formula holds:
  \begin{equation}\label{eq:distr-def-lorentz}
    \|f\|_{L^{p,q}}=p^{\frac{1}{q}}\left(\int_{0}^{\infty}[d_f(\lambda)^{\frac{1}{p}}\lambda]^q\frac{d\lambda}{\lambda}\right)^{\frac{1}{q}},
  \end{equation}
with the usual modification when $q=\infty$.
The following inequality represents an extension of Young's inequality in Lorentz spaces. It was proved by O'Neil and can be found in \cite{ONe}.
\end{proposition}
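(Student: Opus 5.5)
The plan is to pass from the definition of $\|f\|_{L^{p,q}}$ through $f^*$ to an integral involving $d_f$. The link is the layer-cake principle together with the fact that $f^*$ and $d_f$ are essentially inverse to one another, up to a change of variables. Since $f^*$ is the generalized inverse of $d_f$, both are nonincreasing and right-continuous, and for every $\lambda>0$ we have the equidistribution property
\[
|\{t>0:\ f^*(t)>\lambda\}|=d_f(\lambda).
\]
That is, $f^*$ on $(0,\infty)$ with Lebesgue measure has the same distribution function as $|f|$ on $(X,\mu)$. I take this as the standard first step; it follows directly from the definition of $f^*$ as an infimum together with right-continuity of $d_f$.

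For $0<q<\infty$, rewrite the left-hand side as
\[
\int_0^\infty t^{q/p}f^*(t)^q\,\frac{dt}{t},
\]
and express the factor $f^*(t)^q$ by the layer-cake formula:
\[
f^*(t)^q=\int_0^{f^*(t)} q\lambda^{q-1}\,d\lambda .
\]
Tonelli's theorem, valid since everything is nonnegative, then gives
\[
\int_0^\infty q\lambda^{q-1}\int_{\{t:\ f^*(t)>\lambda\}} t^{q/p-1}\,dt\,d\lambda .
\]
Because $f^*$ is nonincreasing, the set $\{t:\ f^*(t)>\lambda\}$ is an interval $(0,T)$ with $T=d_f(\lambda)$, by the equidistribution step. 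The inner integral is therefore $\frac{p}{q}\,d_f(\lambda)^{q/p}$. Collecting constants yields
\[
p\int_0^\infty \bigl(\lambda\, d_f(\lambda)^{1/p}\bigr)^q\,\frac{d\lambda}{\lambda},
\]
and taking $q$-th roots gives the factor $p^{1/q}$, which is exactly \eqref{eq:distr-def-lorentz}.

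For $q=\infty$ the "usual modification" means showing
\[
\sup_t t^{1/p}f^*(t)=\sup_\lambda \lambda\, d_f(\lambda)^{1/p}.
\]
I would prove the two inequalities separately. First, for $\lambda<f^*(t)$ we have $d_f(\lambda)>t$, since otherwise $f^*(t)\le\lambda$. Hence
\[
\lambda\, d_f(\lambda)^{1/p}\ge \lambda t^{1/p},
\]
and letting $\lambda\uparrow f^*(t)$ gives "$\le$" for the $t$-supremum. Conversely, for $t<d_f(\lambda)$ we get $f^*(t)\ge\lambda$, and letting $t\uparrow d_f(\lambda)$ gives the reverse inequality. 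The case $d_f(\lambda)=\infty$ is handled by noting that both sides are then infinite.

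The main obstacle is the equidistribution step, namely the claim that $\{t:\ f^*(t)>\lambda\}=(0,d_f(\lambda))$ exactly, including at the endpoints. This is where right-continuity of $d_f$ and the infimum in the definition of $f^*$ must be used carefully. I would prove both inclusions directly: $f^*(t)>\lambda$ if and only if $d_f(\lambda)>t$. The forward direction is immediate from the definition. For the reverse direction, if $d_f(\lambda)>t$ then right-continuity gives $d_f(\lambda+\varepsilon)>t$ for small $\varepsilon>0$, so $f^*(t)\ge\lambda+\varepsilon>\lambda$.
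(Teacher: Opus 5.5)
Your proof is correct. The paper gives no argument of its own for this proposition. It imports the identity as Proposition~1.4.9 of Grafakos's \emph{Classical Fourier Analysis}. The final sentence about O'Neil is a lead-in to the next lemma that ended up inside the environment; it is not part of the claim. So your write-up supplies a self-contained proof where the paper has only a citation.

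Your route is the direct one:
\begin{itemize}
\item expand $f^*(t)^q$ by the layer-cake formula;
\item interchange the integrals by Tonelli;
\item evaluate the inner integral using $\{t>0:\ f^*(t)>\lambda\}=(0,d_f(\lambda))$.
\end{itemize}
You correctly derive that last identity from the equivalence $f^*(t)>\lambda \iff d_f(\lambda)>t$. Its nontrivial direction uses right-continuity of $d_f$, which holds for any measure by continuity from below, even when $d_f(\lambda)=\infty$. Strictly speaking, the $q<\infty$ computation only needs the measure of that superlevel set, so the endpoint issue you single out as the main obstacle matters only through $\sup\{t:\ f^*(t)>\lambda\}=d_f(\lambda)$. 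Your equivalence delivers exactly that.

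This argument treats every measurable $f$ at once. The degenerate values $d_f(\lambda)=\infty$ and $f^*(t)=\infty$ are absorbed automatically.

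The common textbook alternative works differently. It verifies the identity by hand for simple functions, where $d_f$ and $f^*$ are explicit step functions and the two sides agree by summation by parts. It then passes to general $f$ through a monotone approximation $|f_k|\uparrow|f|$. That route avoids Tonelli but needs the extra fact that $d_{f_k}\uparrow d_f$ and $f_k^*\uparrow f^*$.

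Your $q=\infty$ argument, which uses the same equivalence in each direction, is also fine. It includes the case $d_f(\lambda)=\infty$, where both suprema are infinite.
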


\begin{lemma}(O'Neil's inequality)
    Let $f\in L^{p,\alpha_1}(\mathbb{R}^n),$ $g\in L^{q,\alpha_2}(\mathbb{R}^n),$ with $1<p,q,r<\infty,$ and $0<\alpha_1,\alpha_2\leq \infty$ such that \[\frac{1}{r}+1=\frac{1}{p}+\frac{1}{q} \qquad \text{and} \quad \frac{1}{\alpha}\leq \frac{1}{\alpha_1}+\frac{1}{\alpha_2},\]
   and $\alpha>0.$
   Then $f*g\in L^{r,\alpha}(\mathbb{R}^n),$ and 
   \[
   \|f*g\|_{L^{r,\alpha}} \leq 3r \|f\|_{L^{p,\alpha_1}}\|g\|_{L^{q,\alpha_2}}.
   \]
\end{lemma}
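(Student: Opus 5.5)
The plan is O'Neil's original route: rearrangement estimates for $(f*g)^{**}$, followed by Hardy's inequalities in the Lorentz quasi-norms. Throughout, $h^{**}(t)=\frac1t\int_0^t h^*(u)\,du$ denotes the maximal average of the decreasing rearrangement. Since $h^*\le h^{**}$, it suffices to bound $\|(f*g)^{**}\|$ in the $L^{r,\alpha}$-type functional $\left(\int_0^\infty (t^{1/r}h^{**}(t))^\alpha\,dt/t\right)^{1/\alpha}$.

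\textbf{Step 1 (O'Neil's rearrangement inequality).} We prove that for every $t>0$,
\[
(f*g)^{**}(t)\le t\,f^{**}(t)g^{**}(t)+\int_t^\infty f^*(u)g^*(u)\,du .
\]
To do this, we split $f$ at height $f^*(t)$ and $g$ at height $g^*(t)$. Write $f=f_1+f_2$, where $f_1=(|f|-f^*(t))_+\operatorname{sgn} f$ is supported on a set of measure at most $t$ and $f_2$ is bounded by $f^*(t)$; decompose $g$ the same way. We then estimate the four pieces separately:
\begin{itemize}
\item $f_1*g_1$ is controlled in $L^1$, using $\|f_1\|_1\le t f^{**}(t)$, and then averaged over a set of measure $t$;
\item $f_1*g_2$ and $f_2*g_1$ are controlled in $L^\infty$ by products such as $\|f_1\|_1 g^*(t)$;
\item $f_2*g_2$ is bounded by $\int_t^\infty f^*g^*\,du$ plus $t f^*(t)g^*(t)$, using the Hardy--Littlewood rearrangement inequality $\int |F G|\le\int F^*G^*$.
\end{itemize}
Combining these with $h^{**}(t)=\sup_{|E|=t}\frac1{|E|}\int_E|h|$ gives the inequality. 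This step is the main obstacle, because the bookkeeping on the truncated pieces has to be done carefully.

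\textbf{Step 2 (Lorentz norm of the first term).} Multiply by $t^{1/r}$ and use $1+\frac1r=\frac1p+\frac1q$ to write
\[
t^{1/r}\cdot t f^{**}g^{**}=\bigl(t^{1/p}f^{**}\bigr)\bigl(t^{1/q}g^{**}\bigr).
\]
By Hölder's inequality in the measure $dt/t$ with exponents satisfying $\frac1\alpha\le\frac1{\alpha_1}+\frac1{\alpha_2}$ (the $\ell^\alpha$ embedding covers strict inequality), this is bounded by $\|t^{1/p}f^{**}\|_{L^{\alpha_1}(dt/t)}\|t^{1/q}g^{**}\|_{L^{\alpha_2}(dt/t)}$. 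Hardy's inequality, valid since $p,q>1$, then bounds each factor by $p'\|f\|_{L^{p,\alpha_1}}$ and $q'\|g\|_{L^{q,\alpha_2}}$ respectively.

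\textbf{Step 3 (Lorentz norm of the tail term).} Apply the dual Hardy inequality to $t^{1/r}\int_t^\infty f^*g^*\,du$, which is valid since $r<\infty$. This bounds it by $r\,\|u^{1/r+1}f^*g^*\|_{L^\alpha(du/u)}$. Using $u^{1+1/r}=u^{1/p}u^{1/q}$ and Hölder's inequality as in Step 2 bounds this by $r\|f\|_{L^{p,\alpha_1}}\|g\|_{L^{q,\alpha_2}}$.

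\textbf{Step 4 (constants).} Tracking the constants from Steps 2 and 3 gives a bound of the form $C(p,q,r)$. Reducing this to the stated $3r$ uses the relation among $p'$, $q'$ and $r$ implied by $1+\frac1r=\frac1p+\frac1q$; I expect to follow O'Neil's computation here rather than reprove it. For $0<\alpha<1$, the Hardy inequalities must be replaced by their quasi-norm versions for monotone functions. If needed, I would simply cite \cite{ONe} for that range.
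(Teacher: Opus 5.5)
The paper gives no proof of this lemma and simply cites \cite{ONe}. Your Steps 1--3 are O'Neil's argument, and they are sound. In Step 1, use the exact identity $\|f_1\|_{L^1}=t\,(f^{**}(t)-f^*(t))$ instead of the cruder $\|f_1\|_{L^1}\le t f^{**}(t)$. With it, the four pieces add up exactly to $t f^{**}(t)g^{**}(t)+\int_t^\infty f^*(u)g^*(u)\,du$. For $\alpha,\alpha_1,\alpha_2\ge 1$ with $\frac1\alpha=\frac1{\alpha_1}+\frac1{\alpha_2}$, Steps 2 and 3 together with Minkowski's inequality in $L^\alpha(dt/t)$ then give
\[
\|f*g\|_{L^{r,\alpha}}\le (p'q'+r)\,\|f\|_{L^{p,\alpha_1}}\,\|g\|_{L^{q,\alpha_2}} .
\]
The genuine gap is Step 4. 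The relation $1+\frac1r=\frac1p+\frac1q$ only says $\frac1{p'}+\frac1{q'}=\frac1{r'}$, and it cannot turn $p'q'+r$ into $3r$: for fixed $q$ and $p\to1^+$ we have $p'\to\infty$ while $r\to q$.

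Better bookkeeping will not fix this, because with the $f^*$-based quasi-norms of the paper's definition the inequality with constant $3r$ is false. Take $\alpha_1=\alpha_2=\alpha=\infty$, $f=|x|^{-n/p}$, $g=|x|^{-n/q}$ and $v_n=|B(0,1)|$. Then $\|f\|_{L^{p,\infty}}=v_n^{1/p}$ and $\|g\|_{L^{q,\infty}}=v_n^{1/q}$. By homogeneity $f*g=C_{p,q}|x|^{-n/r}$, where $C_{p,q}=\int_{\mathbb R^n}|e_1-z|^{-n/p}|z|^{-n/q}\,dz$, so $\|f*g\|_{L^{r,\infty}}=C_{p,q}v_n^{1/r}$. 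The claimed bound therefore reads $C_{p,q}\le 3r\,v_n$. However, integrating only over $|z-e_1|<\frac12$ gives $C_{p,q}\ge (2/3)^{n/q}\,2^{-n/p'}\,p'\,v_n$, which tends to $\infty$ as $p\to1^+$.

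The constant $3r$ in O'Neil's theorem refers to his norms $\|t^{1/p}f^{**}(t)\|_{L^{\alpha_1}(dt/t)}$ on the right-hand side. In that normalization Step 2 needs no Hardy inequality, and your argument gives $1+r\le 2r$ directly, for $\alpha\ge1$ and equality in the index condition. For the lemma as normalized in the paper, your argument proves boundedness with a constant depending on $p$, $q$ and the Lorentz indices, not $3r$, and that is the true statement.

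Two smaller points. First, for $0<\alpha<1$ (or $\alpha_i<1$), prove the monotone Hardy inequalities directly rather than deferring to \cite{ONe}. For decreasing $\psi\ge0$ one has $\int_t^\infty\psi\le\sum_{k\ge0}2^kt\,\psi(2^kt)$ and $\int_0^t\psi\le\sum_{k\ge0}2^{-k-1}t\,\psi(2^{-k-1}t)$. After $(\sum a_k)^\alpha\le\sum a_k^\alpha$ and a change of variables, you are left with geometric series in $2^{-k\alpha/r}$ and $2^{-k\alpha_1/p'}$ respectively, which converge because $r<\infty$ and $p>1$. Second, when $\frac1\alpha<\frac1{\alpha_1}+\frac1{\alpha_2}$, it is cleaner to prove the equality case and then use the embedding $L^{r,\tilde\alpha}\hookrightarrow L^{r,\alpha}$ for $\tilde\alpha\le\alpha$. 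Its best constant can exceed $1$ (test it on indicator functions), so this step introduces a further constant.
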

\subsection{Properties of Bessel Kernels}
Combining Proposition~6.1.5 on p.~6 of \cite{GraMod} with the estimate on p.~133 of \cite{Ste}, we obtain the pointwise estimates on the Bessel kernel, stated in the following lemma. 
 
\begin{lemma}[Integrability of the Bessel kernel] 
\label{lem:bessel-kernel-Lr}
Let $0<s<n$, and let $G_s$ denote the Bessel kernel on $\mathbb R^n$ of order $s$.
Then there exist constants $C,c>0$ (depending only on $n$ and $s$) such that
\begin{equation}\label{eq:Gs-pointwise}
G_s(x)\le C\,|x|^{s-n}\quad \text{for }0<|x|\le 1,
\qquad
G_s(x)\le C\,e^{-c|x|}\quad \text{for }|x|\ge 1.
\end{equation}
Consequently,
\begin{equation}\label{eq:Gs-Lr-range}
G_s\in L^r(\mathbb R^n)
\quad\text{for every}\quad
1\le r<\frac{n}{n-s},
\end{equation}
and
\[
G_s\in L^{\frac{n}{n-s},\infty}(\mathbb R^n).
\]
\end{lemma}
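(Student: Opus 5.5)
The pointwise bounds \eqref{eq:Gs-pointwise} are taken as given from the cited sources: the small-$|x|$ bound from the Riesz-type singularity of $G_s$ when $0<s<n$, and the exponential decay at infinity. The plan is to derive the integrability statements \eqref{eq:Gs-Lr-range} and the weak-type membership from these bounds. Throughout, we use $G_s\ge 0$ and split $\mathbb R^n$ into the unit ball $B=\{|x|\le 1\}$ and its complement.

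\emph{Strong integrability.} Fix $1\le r<\frac{n}{n-s}$. Outside $B$ we have
\[
\int_{|x|\ge1}G_s^r\le C^r\int_{|x|\ge 1}e^{-cr|x|}\,dx<\infty
\]
for every $r\ge1$. On $B$, polar coordinates give
\[
\int_{|x|\le1}G_s^r\le C^r\,\omega_{n-1}\int_0^1 \rho^{(s-n)r+n-1}\,d\rho ,
\]
which is finite exactly when $(s-n)r+n>0$, i.e.\ $r<\frac{n}{n-s}$. For $r=1$ one can alternatively note $\|G_s\|_{L^1}=\widehat{G_s}(0)=1$.

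\emph{Weak-type endpoint.} Set $r_0=\frac{n}{n-s}$. We estimate the distribution function $d_{G_s}(\lambda)$ and show $\sup_\lambda \lambda\, d_{G_s}(\lambda)^{1/r_0}<\infty$; by \eqref{eq:distr-def-lorentz} with $q=\infty$ this is equivalent to $G_s\in L^{r_0,\infty}$. Since $G_s\le C$ on $\{|x|\ge1\}$, there are two regimes.
\begin{itemize}
\item For $\lambda\ge C$, only the ball contributes, and $\{x\in B: C|x|^{s-n}>\lambda\}\subset\{|x|<(C/\lambda)^{1/(n-s)}\}$. Hence $d_{G_s}(\lambda)\lesssim \lambda^{-n/(n-s)}$, which gives $\lambda\, d_{G_s}(\lambda)^{1/r_0}\lesssim 1$.
\item For $\lambda<C$, Chebyshev with the $L^1$ bound gives $d_{G_s}(\lambda)\le \|G_s\|_{L^1}/\lambda$. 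Then $\lambda\, d_{G_s}(\lambda)^{1/r_0}\le \lambda^{1-1/r_0}\|G_s\|_1^{1/r_0}$, which is bounded for $\lambda<C$ because $1-1/r_0=s/n>0$.
\end{itemize}
Alternatively, the trivial bound $d\le |B|+d_{e^{-c|x|}}(\lambda/C)$ works as well.

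\emph{Main obstacle.} The only delicate point is the small-$\lambda$ regime of the weak estimate, where the tail of $G_s$ matters. It is handled by the $L^1$ (or exponential) bound together with the positivity of $s/n$. Everything else is a direct polar-coordinate computation.
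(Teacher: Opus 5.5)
Your proposal is correct and takes the same approach as the paper, which also imports the pointwise bounds from the cited references (Grafakos's Proposition~6.1.5 and Stein's estimate) and treats the $L^r$ and $L^{\frac{n}{n-s},\infty}$ memberships as immediate consequences. Your polar-coordinate computation and two-regime distribution estimate (the local singularity for $\lambda\ge C$, Chebyshev with $s/n>0$ for $\lambda<C$) make explicit what the paper leaves implicit.
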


\section{Necessary Conditions and Exponent Geometry}

We determine the possible range of exponents for boundedness of the bilinear
Bessel potentials using dimensional analysis.

\begin{proposition}
\label{prop:necessary-conds-weak}
Let $0<s<n$.
Fix $1\le p,q\le \infty$ and $0<r<\infty$.  

If there exists a constant $C>0$ such that
\[
\|\mathcal J_s(f,g)\|_{L^{r,\infty}}
\le C\,\|f\|_{L^p}\,\|g\|_{L^q}
\qquad\text{for all }f,g\in \mathcal S(\mathbb R^n),
\]
then necessarily
\begin{equation}\label{eq:necessary-strip}
\frac1p+\frac1q-\frac{s}{n}\ \le\ \frac1r\ \le\ \frac1p+\frac1q.
\end{equation}
\end{proposition}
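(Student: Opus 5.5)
The two inequalities come from two different regimes of the kernel. The Bessel kernel is not homogeneous, so a single dilation argument does not work. Instead, each side of \eqref{eq:necessary-strip} is obtained from its own scaling limit.

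\textbf{Lower bound} $\frac1r\ge\frac1p+\frac1q-\frac sn$. Here we use small scales, where $G_s(y)\approx c|y|^{s-n}$ by \eqref{eq:Gs-pointwise} (together with the matching lower bound $G_s(y)\ge c|y|^{s-n}$ for $|y|\le1$, which follows from positivity and the standard asymptotics of $G_s$ near the origin). Take fixed nonnegative $f,g\in\mathcal S$ with $f,g\ge 1$ on $B(0,1)$, and set $f_\delta(x)=f(x/\delta)$, $g_\delta(x)=g(x/\delta)$ with $\delta\to0$. Then $\|f_\delta\|_{L^p}\|g_\delta\|_{L^q}=\delta^{n/p+n/q}\|f\|_p\|g\|_q$.

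For $|x|\le\delta/2$, restricting the integral to $|y|\le\delta/2$ gives
\[
\mathcal J_s(f_\delta,g_\delta)(x)\ge c\int_{|y|\le\delta/2}|y|^{s-n}\,dy\ge c\,\delta^{s}.
\]
Hence $\|\mathcal J_s(f_\delta,g_\delta)\|_{L^{r,\infty}}\ge c\,\delta^s\,\delta^{n/r}$. The hypothesis then gives $\delta^{s+n/r}\lesssim \delta^{n/p+n/q}$ as $\delta\to0$. This forces $s+\frac nr\ge \frac np+\frac nq$, which is the lower bound.

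\textbf{Upper bound} $\frac1r\le\frac1p+\frac1q$. Here we use large scales, where the kernel behaves like an integrable bump, i.e.\ essentially like $(\int G_s)\,\delta_0=\delta_0$, so $\mathcal J_s(f,g)\approx fg$. Set $f_R(x)=f(x/R)$, $g_R(x)=g(x/R)$ with $R\to\infty$, so that $\|f_R\|_p\|g_R\|_q=R^{n/p+n/q}\|f\|_p\|g\|_q$.

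For $|x|\le R/2$ and $|y|\le 1$, both $x\pm y$ lie in $B(0,R)$. Restricting to $|y|\le1$ and using $G_s>0$ gives
\[
\mathcal J_s(f_R,g_R)(x)\ge \int_{|y|\le 1}G_s(y)\,dy=:c_0>0.
\]
Thus $\|\mathcal J_s(f_R,g_R)\|_{L^{r,\infty}}\ge c_0\,|B(0,R/2)|^{1/r}\approx R^{n/r}$. The hypothesis then gives $R^{n/r}\lesssim R^{n/p+n/q}$ for all large $R$, which forces $\frac1r\le\frac1p+\frac1q$.

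\textbf{Main obstacle.} The only delicate point is the pointwise lower bound on the kernel. Lemma~\ref{lem:bessel-kernel-Lr} only states upper bounds, so I would invoke the standard facts that $G_s>0$ (it is a superposition of Gaussians) and $G_s(y)\sim c_{n,s}|y|^{s-n}$ as $y\to0$, both from the same sources \cite{GraMod,Ste}. In the large-scale argument positivity alone suffices.

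When $p$ or $q=\infty$, Schwartz test functions have infinite $L^\infty$-to-$L^p$ ratio issues only in name: the $L^\infty$ norm is scale-invariant, consistent with $n/\infty=0$, so the same computations apply verbatim. Finally, since the test functions are nonnegative, lower bounds on a set $E$ give $\|h\|_{L^{r,\infty}}\ge \lambda|E|^{1/r}$ directly from \eqref{eq:distr-def-lorentz}.
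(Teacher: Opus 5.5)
Your proof is correct and follows the same route as the paper. For $\frac1r\le\frac1p+\frac1q$ you test on functions equal to $1$ on a ball of radius $R\to\infty$ and use only positivity of $G_s$; the paper uses $\mathbf 1_{B(0,R)}$ and the annulus $\tfrac12\le|y|\le1$ instead of $|y|\le1$. For the other inequality you dilate a fixed bump to scale $\delta=\lambda^{-1}\to0$ and use $G_s(y)\gtrsim|y|^{s-n}$ near the origin, exactly as the paper does with its $L^p$-normalized dilates.

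The differences are cosmetic. If anything, your Schwartz test functions stay inside the stated hypothesis class, which the paper's indicators do not. You also correctly point out that the local lower bound on $G_s$ is needed but is not part of Lemma~\ref{lem:bessel-kernel-Lr}; the paper uses it tacitly.
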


\begin{proof}
Since $G_s\ge 0$ and is not identically zero, there exists a $c_0>0$ such that
\[
c_0:=\int_A G_s(y)\,dy>0,\qquad\text{where } A:=\{y\in\mathbb R^n: 1/2\le |y|\le 1\}.
\]
For $R\ge 3$, set $f_R=\mathbf 1_{B(0,R)}$ and $g_R=\mathbf 1_{B(0,R)}$.
If $x\in B(0,R-2)$ and $y\in A$, then
\[
|x\pm y|\le |x|+|y|\le (R-2)+1<R.
\]
Hence $f_R(x-y)=g_R(x+y)=1$ for all $y\in A$, and $x\in B(0, R-2).$ Therefore,
\[
\mathcal J_s(f_R,g_R)(x)\ \ge\ \int_A G_s(y)\,dy\ =\ c_0
\qquad\text{for all }x\in B(0,R-2).
\]
By the definition of the weak-$L^r$ quasi-norm,
\[
\|\mathcal J_s(f_R,g_R)\|_{L^{r,\infty}}
\ge c_0\,|B(0,R-2)|^{1/r}\ \gtrsim\ R^{n/r}.
\]
On the other hand, $\|f_R\|_{L^p}\simeq R^{n/p}$ and $\|g_R\|_{L^q}\simeq R^{n/q}$.
If the stated weak-type estimate held, we would obtain for all large $R$
\[
R^{n/r}\ \lesssim\ R^{n/p+n/q},
\]
which forces $\frac1r\le \frac1p+\frac1q$.

To obtain the lower bound
\(
\frac1r \ge \frac1p+\frac1q-\frac{s}{n},
\)
we choose nonnegative $f,g\in C_c^\infty(\mathbb R^n)$ supported in $B(0,1)$ and satisfying
$f\equiv g\equiv 1$ on $B(0,1/2)$. For $\lambda\ge 1$ define the $L^p$- and $L^q$-normalized dilates
\[
f_\lambda(x):=\lambda^{n/p}f(\lambda x),
\qquad
g_\lambda(x):=\lambda^{n/q}g(\lambda x),
\]
so that $\|f_\lambda\|_{L^p}=\|f\|_{L^p}$ and $\|g_\lambda\|_{L^q}=\|g\|_{L^q}$.

Let $x\in \mathbb R^n$ satisfy $|x|\le \frac{1}{4\lambda}$ and let
$y\in \mathbb R^n$ with $|y|\le \frac{1}{4\lambda}$.
Then
$|x\pm y|\le |x|+|y|\le \frac{1}{2\lambda}$.

By the choice of $f$ and $g$, this implies
\[
f_\lambda(x-y)=\lambda^{n/p},
\qquad
g_\lambda(x+y)=\lambda^{n/q}.
\]

Using the lower bound $G_s(y)\gtrsim |y|^{s-n}$ for $|y|\le 1$, we obtain
\begin{align*}
\mathcal J_s(f_\lambda,g_\lambda)(x)
&=\int_{\mathbb R^n} G_s(y)\,f_\lambda(x-y)\,g_\lambda(x+y)\,dy\\
&\ge
\int_{|y|\le 1/(4\lambda)} G_s(y)\,f_\lambda(x-y)\,g_\lambda(x+y)\,dy\\
&\gtrsim
\lambda^{n/p+n/q}
\int_{|y|\le 1/(4\lambda)} |y|^{s-n}\,dy\simeq
\lambda^{n/p+n/q}\cdot \lambda^{-s}
=
\lambda^{\,n/p+n/q-s}.
\end{align*}

Hence, the superlevel set
\[
E_\lambda:=\Bigl\{x:\ \mathcal J_s(f_\lambda,g_\lambda)(x)\gtrsim \lambda^{n/p+n/q-s}\Bigr\}
\]
contains a ball of radius $\simeq \lambda^{-1}$, and so $|E_\lambda|\gtrsim \lambda^{-n}$.
By the definition of the weak-$L^r$ quasi-norm,
\begin{align*}
\|\mathcal J_s(f_\lambda,g_\lambda)\|_{L^{r,\infty}}
\ &\gtrsim\ \lambda^{n/p+n/q-s}\,|E_\lambda|^{1/r}
\ \gtrsim\ \lambda^{n/p+n/q-s}\cdot (\lambda^{-n})^{1/r}\\
\ &=\ \lambda^{\,n/p+n/q-s-n/r}.
\end{align*}
Therefore,
letting $\lambda\to\infty$, the bound
\[
\lambda^{\,n/p+n/q-s-n/r}\lesssim 1
\]
forces
\[
\frac{1}{r}\ge \frac{1}{p}+\frac{1}{q}-\frac{s}{n}.
\]
This completes the proof.
\end{proof}

\section{Strong-Type Lebesgue Bounds}
\subsection{\textbf{$L^p\times L^q\to L^r$}}

To treat the Lebesgue range
\[
\frac1p+\frac1q=\frac1r,\qquad p,q\ge1,
\]
we recall a version of a theorem of Grafakos and Soria (in \cite{GraSor}).  
Given a nonnegative regular Borel measure $\mu$ on $\mathbb{R}^n\times\mathbb{R}^n$, set
\[
T_\mu(f,g)(x)
=
\int_{\mathbb{R}^n\times\mathbb{R}^n}
f(x-y)g(x-z)\,d\mu(y,z).
\]

\begin{theorem}[Grafakos--Soria {\cite{GraSor}}]\label{thm:grafakos-soria}
Let $1/p+1/q=1/r\leq1$. For nonnegative $\mu$, if $\mu$ is a finite measure, the following hold:
 $T_\mu:L^p(\mathbb{R}^n)\times L^q(\mathbb{R}^n)\to L^r(\mathbb{R}^n)$ and
 $T_\mu:L^p(\mathbb{R}^n)\times L^q(\mathbb{R}^n)\to L^{r,\infty}(\mathbb{R}^n)$.

\end{theorem}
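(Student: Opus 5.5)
We prove the strong-type bound directly; the weak-type bound then follows from $\|h\|_{L^{r,\infty}}\le\|h\|_{L^r}$. By positivity of $\mu$ we have $|T_\mu(f,g)|\le T_\mu(|f|,|g|)$, so we may assume $f,g\ge0$. The key tool is Minkowski's integral inequality in $L^r$, which is available because $r\ge1$ (this is where the hypothesis $1/r\le1$ enters).

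\emph{Step 1: pull the norm inside the integral.} Since $\mu$ is a nonnegative finite measure and $r\ge1$, Minkowski's integral inequality gives
\[
\|T_\mu(f,g)\|_{L^r}
\le \int_{\mathbb R^n\times\mathbb R^n}\bigl\|f(\cdot-y)\,g(\cdot-z)\bigr\|_{L^r}\,d\mu(y,z).
\]
Measurability of $(x,y,z)\mapsto f(x-y)g(x-z)$ is routine, since $\mu$ is a regular Borel measure and $f,g$ are Borel.

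\emph{Step 2: estimate the integrand by Hölder.} For each fixed $(y,z)$, Hölder's inequality with $1/r=1/p+1/q$ gives
\[
\|f(\cdot-y)g(\cdot-z)\|_{L^r}\le \|f(\cdot-y)\|_{L^p}\|g(\cdot-z)\|_{L^q}=\|f\|_{L^p}\|g\|_{L^q}.
\]
The last equality uses translation invariance of Lebesgue measure, and the estimate holds including when $p$ or $q$ equals $\infty$.

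\emph{Step 3: integrate.} Inserting Step 2 into Step 1 yields
\[
\|T_\mu(f,g)\|_{L^r}\le \mu(\mathbb R^{2n})\,\|f\|_{L^p}\|g\|_{L^q}.
\]

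\emph{Remarks.} The case $r=\infty$ (so $p=q=\infty$) is trivial, and the same argument covers it. The only delicate point is justifying Minkowski's inequality. If $f,g$ are not a priori nice, I would first prove the bound for nonnegative simple functions and then pass to general $f,g$ by monotone convergence. The substantive content of Grafakos--Soria lies in the case $r<1$, which is excluded by the hypothesis here.

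For the application to $\mathcal J_s$, take $\mu$ to be the pushforward of $G_s(y)\,dy$ under $y\mapsto(y,-y)$. This measure is finite because $G_s\in L^1$ by Lemma~\ref{lem:bessel-kernel-Lr}.
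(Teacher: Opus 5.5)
The paper gives no proof of this statement. It imports the result from Grafakos--Soria and applies it in Corollary~\ref{cor:bessel-strong} with $\mu=\delta_0(y+z)G_s(y)$, which is the same measure as your pushforward of $G_s(y)\,dy$ under $y\mapsto(y,-y)$.

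Your argument is a correct, self-contained proof of exactly the case stated. The hypothesis $1/p+1/q\le 1$ forces $p,q\ge 1$ and $r\ge 1$. Then Minkowski's integral inequality over the finite measure $\mu$, followed by H\"older and translation invariance, gives $\|T_\mu(f,g)\|_{L^r}\le \mu(\mathbb R^{2n})\|f\|_{L^p}\|g\|_{L^q}$. The weak-type conclusion is indeed redundant.

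Compared with the citation, your route makes the role of each hypothesis visible and yields an explicit constant. In the application the constant is $\|G_s\|_{L^1}=\widehat{G_s}(0)=1$. It also shows that $1/r\le 1$ is used only so that $\|\cdot\|_{L^r}$ is a norm, so nothing deeper from the cited work is needed for Corollary~\ref{cor:bessel-strong}. The same observation explains the paper's structure: for $r<1$ Minkowski's inequality is unavailable. That is why $L^1\times L^1\to L^{1/2}$ has to be handled separately, through the Kenig--Stein estimate for the dyadic pieces $B_k$.

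Two minor remarks:
\begin{itemize}
\item The reduction to simple functions is unnecessary. Minkowski's integral inequality holds directly for nonnegative jointly measurable integrands, by Tonelli.
\item The relevant $\mu$ is singular, so $T_\mu(f,g)(x)$ at a fixed $x$ depends on which representatives of $f,g$ you choose. All conclusions, including the $r=\infty$ bound you call trivial, should therefore be read for almost every $x$. This follows from Tonelli applied to the $(dx\times\mu)$-null set $\{(x,y,z):\ x-y\in N_f \text{ or } x-z\in N_g\}$. Here $N_f,N_g$ are the relevant Lebesgue-null sets, e.g.\ $N_f=\{|f|>\|f\|_{L^\infty}\}$.
\end{itemize}
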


\begin{corollary}\label{cor:bessel-strong}
Let $s<n$ and $1/p+1/q=1/r\le1$. Then the bilinear Bessel potential satisfies
$\mathcal J_s:
L^p(\mathbb{R}^n)\times L^q(\mathbb{R}^n)
\longrightarrow
L^r(\mathbb{R}^n)$.
\end{corollary}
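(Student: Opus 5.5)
The plan is to write $\mathcal J_s$ as an operator $T_\mu$ of the Grafakos--Soria type and then invoke Theorem~\ref{thm:grafakos-soria}. Define a nonnegative Borel measure $\mu$ on $\mathbb{R}^n\times\mathbb{R}^n$ as the pushforward of $G_s(y)\,dy$ under the map $y\mapsto (y,-y)$. Equivalently, for every nonnegative Borel $F$ on $\mathbb{R}^{2n}$,
\[
\int_{\mathbb{R}^n\times\mathbb{R}^n} F(y,z)\,d\mu(y,z)=\int_{\mathbb{R}^n} F(y,-y)\,G_s(y)\,dy .
\]
This measure lives on the antidiagonal $\{z=-y\}$. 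It is nonnegative because $G_s\ge 0$. It is also a regular Borel measure, since it is a finite Borel measure on the Polish space $\mathbb{R}^{2n}$.

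Applying this with $F(y,z)=f(x-y)g(x-z)$ gives
\[
T_\mu(f,g)(x)=\int_{\mathbb{R}^n} f(x-y)\,g(x+y)\,G_s(y)\,dy=\mathcal J_s(f,g)(x).
\]
For general (signed or complex) $f,g$, the integral converges absolutely for almost every $x$. To see this, apply the same identity to $|f|,|g|$ and use the bound for $T_\mu(|f|,|g|)$ obtained below.

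The one hypothesis left to check is finiteness of $\mu$. We have $\mu(\mathbb{R}^{2n})=\int G_s=\|G_s\|_{L^1}$, which is finite by Lemma~\ref{lem:bessel-kernel-Lr} with $r=1<\frac{n}{n-s}$. Alternatively, $\widehat{G_s}(0)=1$ and $G_s\ge0$ give $\|G_s\|_{L^1}=1$. Theorem~\ref{thm:grafakos-soria} then yields
\[
\|\mathcal J_s(f,g)\|_{L^r}\le C\,\|f\|_{L^p}\|g\|_{L^q}
\]
whenever $1/p+1/q=1/r\le 1$, which is the claim.

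There is no real obstacle; the only points needing care are the bookkeeping of signs ($z=-y$ so that $x-z=x+y$) and the justification that $\mu$ is a legitimate finite regular measure. As a sanity check independent of the theorem, I would also note a direct argument. Minkowski's integral inequality gives
\[
\|\mathcal J_s(f,g)\|_{L^r}\le\int G_s(y)\,\|f(\cdot-y)g(\cdot+y)\|_{L^r}\,dy.
\]
Hölder's inequality bounds the inner norm by $\|f\|_{L^p}\|g\|_{L^q}$, so the right side is at most $\|G_s\|_{L^1}\|f\|_{L^p}\|g\|_{L^q}$. This works since $r\ge1$, and it gives constant $1$.
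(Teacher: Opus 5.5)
Your proposal is correct and follows the paper's proof: the paper takes $\mu=\delta_0(y+z)\,G_s(y)$, which is exactly your pushforward of $G_s(y)\,dy$ onto the antidiagonal, and invokes the Grafakos--Soria theorem using only $G_s\ge 0$ and $G_s\in L^1$. Your extra Minkowski--H\"older check is also valid, since $1/r\le 1$ forces $r\ge 1$. It shows the theorem is not actually needed here and gives the explicit bound $\|\mathcal J_s(f,g)\|_{L^r}\le \|G_s\|_{L^1}\|f\|_{L^p}\|g\|_{L^q}=\|f\|_{L^p}\|g\|_{L^q}$.
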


\begin{proof}
Let $G_s$ be the Bessel kernel and define $\mu(y,z)=\delta_0(y+z)G_s(y)$, where $\delta_0$ is Dirac delta function. Then
\[
\mathcal J_s(f,g)(x)
=
\int f(x-y)g(x-z)\,d\mu(y,z).
\]
Since $G_s\in L^1(\mathbb{R}^n)$ and $G_s\ge0$, $\mu$ is finite, and the claim follows from Theorem~\ref{thm:grafakos-soria}.\end{proof}

\subsection{$L^1\times L^1\to L^{\frac{1}{2}}$}
The following lemma proves the $L^1\times L^1\to L^{\frac{1}{2}}$ boundedness of the Bessel operators, allowing us to extend Corollary \ref{cor:bessel-strong}.
\begin{lemma}
The operator $\mathcal{J}_s$ is bounded between $L^1(\mathbb{R}^n)\times L^1(\mathbb{R}^n)\to L^\frac{1}{2}(\mathbb{R}^n).$
\end{lemma}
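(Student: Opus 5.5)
Since $G_s\ge 0$, we may assume $f,g\ge0$. The plan is to prove the bound
\[
\|\mathcal J_s(f,g)\|_{L^{1/2}}\le C\|G_s\|_{L^1}\|f\|_{L^1}\|g\|_{L^1}.
\]
We will not use Theorem~\ref{thm:grafakos-soria}, since it needs $r\ge1$. Instead we use a localization argument that exploits the fact that $t\mapsto t^{1/2}$ is subadditive. This subadditivity makes $\|\cdot\|_{L^{1/2}}^{1/2}$ subadditive, i.e.
\[
\Big\|\sum_k h_k\Big\|_{L^{1/2}}^{1/2}\le \sum_k\|h_k\|_{L^{1/2}}^{1/2}.
\]

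\textbf{Decomposition.} Tile $\mathbb R^n$ by unit cubes $Q_j$, $j\in\mathbb Z^n$. Write $f=\sum_j f_j$ and $g=\sum_k g_k$ with $f_j=f\mathbf 1_{Q_j}$ and $g_k=g\mathbf 1_{Q_k}$. Then
\[
\mathcal J_s(f,g)=\sum_{j,k}\mathcal J_s(f_j,g_k).
\]
The term $\mathcal J_s(f_j,g_k)(x)$ is nonzero only when $x-y\in Q_j$ and $x+y\in Q_k$. Hence $x$ lies in $\tfrac12(Q_j+Q_k)$, a cube of side $1$ and measure $1$, and $2y\in Q_k-Q_j$, so $|y|\gtrsim |j-k|-C$.

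\textbf{Estimating one piece.} By Cauchy–Schwarz on a set of measure $1$,
\[
\|h\|_{L^{1/2}}^{1/2}=\int|h|^{1/2}\le |\mathrm{supp}\,h|^{1/2}\|h\|_{L^1}^{1/2}\le \|h\|_{L^1}^{1/2}.
\]
Substituting $u=x-y$, $v=x+y$ (Jacobian $2^{-n}$) gives
\[
\|\mathcal J_s(f_j,g_k)\|_{L^1}=2^{-n}\int\!\!\int G_s\!\left(\tfrac{v-u}{2}\right)f_j(u)g_k(v)\,du\,dv\le 2^{-n}\sup_{|z|\gtrsim|j-k|-C}G_s(z)\,\|f_j\|_{L^1}\|g_k\|_{L^1}.
\]
Here is the snag: near the diagonal $G_s$ is unbounded, so the supremum is infinite for close cubes. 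This step is the main obstacle. I would fix it as follows, which also avoids needing $G_s\in L^\infty$.

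\textbf{Handling the singularity.} Split $G_s=G_s\mathbf 1_{|y|\le1}+G_s\mathbf 1_{|y|>1}$.
\begin{itemize}
\item For the tail piece, Lemma~\ref{lem:bessel-kernel-Lr} gives a bounded kernel with $G_s(z)\le Ce^{-c|z|}$. The argument above then yields
\[
\|\cdot\|_{L^{1/2}}^{1/2}\lesssim \sum_{j,k}e^{-c'|j-k|}\|f_j\|_1^{1/2}\|g_k\|_1^{1/2}.
\]
\item For the local piece, $G_s\mathbf 1_{|y|\le 1}$ is in $L^1$, and only pairs with $|j-k|\le C$ interact. For such pairs, integrate first in $x$ and bound $\int f_j(x-y)g_k(x+y)\,dx$ by the $L^1$ norms using the finite-measure support. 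More precisely, apply Hölder with exponent $\tfrac12$ directly: for fixed $y$, the function $x\mapsto f_j(x-y)g_k(x+y)$ is supported in a unit cube. Then
\[
\int\Big(\int G f_jg_k\,dy\Big)^{1/2}dx\le \Big(\int\!\!\int G f_j g_k\,dy\,dx\Big)^{1/2}.
\]
The inner double integral equals $\int G(y)\int f_j(x-y)g_k(x+y)\,dx\,dy$. This still is not controlled by $\|f_j\|_1\|g_k\|_1$ without an $L^\infty$ bound, and it is the genuine difficulty. My first attempt would be to instead write, with $u=x-y$, $v=x+y$,
\[
\int\!\!\int G_s(y)f_j(u)g_k(v)\,dx\,dy=2^{-n}\int\!\!\int G_s\!\left(\tfrac{v-u}{2}\right)f_j(u)g_k(v)\,du\,dv,
\]
which still requires pointwise kernel bounds. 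I therefore expect a true $L^1\times L^1\to L^{1/2}$ bound to need a more refined argument, e.g.\ a Kenig–Stein/Grafakos–Kalton style decomposition of $G_s$ into dyadic annuli $2^{-m}\le|y|<2^{1-m}$. On each annulus the kernel is bounded by $2^{m(n-s)}$, and the same cube-localization at scale $2^{-m}$ gives
\[
\|\cdot\|_{L^{1/2}}^{1/2}\lesssim 2^{-ms/2}\sum_{|j-k|\le C}\|f_j\|_1^{1/2}\|g_k\|_1^{1/2}.
\]
Summing the geometric factor over $m\ge0$ gives convergence since $s>0$.
\end{itemize}

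\textbf{Final summation.} In all cases we are left with a sum of the form
\[
\sum_{j,k}a_{j-k}\|f_j\|_1^{1/2}\|g_k\|_1^{1/2},\qquad (a_\ell)\in\ell^1.
\]
By Young's inequality for sequences together with Cauchy–Schwarz, this is bounded by
\[
\|a\|_{\ell^1}\Big(\sum_j\|f_j\|_1\Big)^{1/2}\Big(\sum_k\|g_k\|_1\Big)^{1/2}=\|a\|_{\ell^1}\|f\|_1^{1/2}\|g\|_1^{1/2}.
\]
Squaring gives the lemma.
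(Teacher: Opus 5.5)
Your final argument is correct and has the same skeleton as the paper's proof. You decompose $G_s$ dyadically, prove an $L^1\times L^1\to L^{1/2}$ bound at each scale, and sum using the subadditivity of $h\mapsto\int|h|^{1/2}$. The difference is where the per-scale estimate comes from.

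The paper dominates each annulus piece by $a_kB_k(|f|,|g|)$ and cites Kenig--Stein (Lemma~5(iii)) for $\|B_k(f,g)\|_{L^{1/2}}\lesssim 2^{-nk}\|f\|_{L^1}\|g\|_{L^1}$. It treats the large annuli ($k<0$) the same way, letting the exponential decay of $G_s$ absorb the growth $2^{n|k|/2}$.

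You instead prove the per-scale bound by hand on $A_m=\{2^{-m}\le|y|<2^{1-m}\}$:
\begin{itemize}
\item On cubes of side $2^{-m}$, each piece is supported on a set of measure $2^{-mn}$.
\item Cauchy--Schwarz then gives $\int h^{1/2}\le 2^{-mn/2}\|h\|_{L^1}^{1/2}$.
\item The change of variables gives $\|h\|_{L^1}\le 2^{-n}\sup_{A_m}G_s\,\|f_j\|_{L^1}\|g_k\|_{L^1}\lesssim 2^{m(n-s)}\|f_j\|_{L^1}\|g_k\|_{L^1}$.
\item An $\ell^2$-summation over the boundedly many near-diagonal pairs closes it.
\end{itemize}
You handle the tail $|y|>1$ at unit scale, using the off-diagonal exponential decay and Young's inequality on $\mathbb Z^n$.

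This is in effect the proof of the Kenig--Stein lemma itself. Your version is therefore self-contained and shows exactly what is used about the kernel: summability of $2^{-mn/2}(\sup_{A_m}G_s)^{1/2}\lesssim 2^{-ms/2}$ near the origin, plus a summable off-diagonal bound at infinity. The paper's version is shorter, given the citation.

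For a clean write-up:
\begin{itemize}
\item Delete the abandoned Cauchy--Schwarz attempt in the local bullet. You correctly diagnose why it fails, but it should not stay in the proof.
\item State the support measure $2^{-mn}$ explicitly, since that factor is what turns $2^{m(n-s)/2}$ into $2^{-ms/2}$.
\item Index the local annuli from $m=1$; the $m=0$ annulus lies in $|y|>1$.
\item Drop the opening claim that the constant is $C\|G_s\|_{L^1}$. What you actually prove depends on the pointwise bounds of Lemma~\ref{lem:bessel-kernel-Lr}, not on $\|G_s\|_{L^1}$ alone.
\end{itemize}
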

\begin{proof}

For $f, g \in L^1(\mathbb{R}^n), $ and $k\in \mathbb{Z}$, we define
\[
B_k(f,g)\coloneq \int_{|y|<2^{-k}}f(x-y)g(x+y)\, dy.
\]

From Lemma~5(iii) of \cite{KenSte}, we know that

\[
\|B_k(f,g)\|_{L^{\frac{1}{2}}}\lesssim 2^{-nk}\|f\|_{L^1}\|g\|_{L^1}.
\]

We decompose  $\mathcal{J}_s(f,g)(x),$ for $x\in \mathbb{R}^n$ in the following way:

\begin{align*}
    |\mathcal{J}_s(f,g)(x)| &\leq \sum_{k\in \mathbb{Z}} \int_{2^{-k-1}<|y|\leq 2^{-k}}|f(x-y)||g(x+y)|G_s(y) \, dy. \\
    &\lesssim\sum_{k< 0} e^{-\frac{2^{-k}}{4}}B_k(|f|,|g|)(x)+ \sum_{k\geq 0} 2^{(-k(s-n))}B_k(|f|,|g|)(x) \\
    &=\sum_{k\in \mathbb{Z}} a_k \, B_k(|f|,|g|),
\end{align*}

where, \[ a_k = \begin{cases} 
          2^{k(n-s)} & k\geq 0 \\
          e^{-\frac{2^{-k}}{4}} & k< 0. 
       \end{cases}
    \]

Since we can pass the power $\frac{1}{2}$ inside the sum, we have

\begin{align*}
\int_{\mathbb{R}^n}|\mathcal{J}_s(f,g)(x)|^{\frac12}dx &\lesssim \int_{\mathbb{R}^n} \left|\sum_{k\in \mathbb{Z}}a_k  \, B_k(|f|,|g|)(x)\right|^\frac{1}{2} \, dx \\&\lesssim \int_{\mathbb{R}^n}\sum_{k\in \mathbb{Z}}a_k^{\frac{1}{2}} B_k(|f|,|g|)(x)^{\frac{1}{2}}\, dx \\
&\lesssim \sum_{k\in \mathbb{Z}}a_k^{\frac{1}{2}} \|B_k(|f|,|g|)\|_{L^{\frac{1}{2}}}^{\frac12} \lesssim \sum_{k\in \mathbb{Z}}a_k^{\frac{1}{2}} 2^{\frac{-nk}{2}}\|f\|^{\frac12}_{L^1}\|g\|^{\frac12}_{L^1} \\
&=\|f\|_{L^1}^{\frac12}\|g\|_{L^1}^{\frac12} \sum_{k\in \mathbb{Z}}a_k^{\frac{1}{2}}2^{\frac {-nk}{2}} \\
&=\|f\|_{L^1}^{\frac{1}{2}}\|g\|_{L^1}^{\frac12} \left(\sum_{k\geq 0} 2^{k(\frac{n-s}{2}-\frac{n}{2})}  + \sum_{k<0} e^{-\frac{2^{-k}}{4}}2^{\frac{-nk}{2}}\right).
\end{align*}

The first summand converges since it is a geometric series and the second summand converges since the limit of the subsequent terms is equal to 0. \end{proof}

\begin{remark}
Combining the two results established in this section and applying the interpolation theorem (see \cite{GraMod}, Proposition~7.2.1), we obtain that for all $1\le p,q\le\infty$ satisfying
\[
\frac1p+\frac1q=\frac1r,
\]
the operator $\mathcal J_s$ extends boundedly as
\[
\mathcal J_s:\ L^p(\mathbb R^n)\times L^q(\mathbb R^n)\longrightarrow L^r(\mathbb R^n).
\]
\end{remark}

\section{Lorentz Space Estimates on the Fractional Index Surface}

\subsection{ $L^1\times L^q \longrightarrow L^{r_1,\alpha}$ for $1<q<\infty$}\label{sec:L1LQ}

Kenig and Stein proved the following result on the boundedness of bilinear Riesz potentials $\mathcal{I}_s$ (see \cite{KenSte}, Theorem~2, p.~4).

\begin{lemma}[$L^p\times L^1$ and $L^1\times L^q$ Lorentz boundedness, Kenig--Stein (1999)]
\label{lem:L1LQ}
Let $0<s<n$.
\begin{enumerate}
\item \label{item:lp-l1}  Suppose
\[
1<p<\infty,
\qquad
\frac{1}{p}+1=\frac{1}{r_1}+\frac{s}{n}.
\]
Then, for every $\alpha_1\ge p$,
\[
\mathcal{I}_s:
L^p(\mathbb R^n)\times L^1(\mathbb R^n)
\longrightarrow
L^{r_1,\alpha_1}(\mathbb R^n)
\]
is bounded. In particular, there exists $C>0$ such that
\[
\|\mathcal{I}_s(f,g)\|_{L^{r_1,\alpha_1}}
\le C\,\|f\|_{L^p}\,\|g\|_{L^1}.
\]
Moreover, the same bounds hold after interchanging the two input spaces.
\item \label{item:l1-l1}
Let $0<s<n$, and set
\[
r_{**}=\frac{n}{2n-s}.
\]
Then the bilinear Riesz potential operator $\mathcal I_s$ satisfies
\[
\mathcal I_s:
L^1(\mathbb R^n)\times L^1(\mathbb R^n)
\longrightarrow
L^{r_{**},\infty}(\mathbb R^n).
\]
More precisely, there exists a constant $C>0$ such that
\[
\|\mathcal I_s(f,g)\|_{L^{r_{**},\infty}}
\le C\,\|f\|_{L^1}\,\|g\|_{L^1},
\qquad
f,g\in L^1(\mathbb R^n).
\] 
\end{enumerate}
\end{lemma}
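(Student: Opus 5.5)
We have to prove the Kenig–Stein statement for the bilinear Riesz potential
\[
\mathcal I_s(f,g)(x)=\int_{\mathbb R^n}|y|^{s-n}f(x-y)g(x+y)\,dy .
\]
We may assume $f,g\ge 0$. The plan is to reduce the bilinear operator to linear fractional integration via the elementary inequality $|y|\ge \tfrac12\bigl(|x-y|-|x+y|\bigr)$ type bounds, or more simply via the change of variables $u=x-y$, $v=x+y$. Since $2y=v-u$, we have
\[
\mathcal I_s(f,g)(x)=2^{n-s}\int f(u)\,g(2x-u)\,|x-u|^{s-n}\,du .
\]

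\textbf{Part (i).} The idea is to integrate out $g$ in $L^1$, using Minkowski's inequality in Lorentz spaces: for $\alpha_1\ge 1$ and $r_1\ge 1$ these are normable. Writing $g=\int g(w)\delta_w\,dw$ gives
\[
\mathcal I_s(f,g)\le 2^{n-s}\int g(w)\,T_w f\,dw,\qquad
T_wf(x):=f(2x-w)\,|x-w/2|^{s-n}\cdot c .
\]
Here $T_w f$ is, up to the translation $x\mapsto x-w/2$ (which preserves Lorentz norms), the function $x\mapsto f(2x)|x|^{s-n}$. Thus
\[
\|\mathcal I_s(f,g)\|_{L^{r_1,\alpha_1}}\lesssim \|g\|_{L^1}\,\bigl\|f(2\cdot)|\cdot|^{s-n}\bigr\|_{L^{r_1,\alpha_1}} .
\]
The last factor is handled by Hölder's inequality in Lorentz spaces (O'Neil), using $f(2\cdot)\in L^{p,p}$, $|x|^{s-n}\in L^{n/(n-s),\infty}$, and $\tfrac1{r_1}=\tfrac1p+\tfrac{n-s}{n}$. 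This gives membership in $L^{r_1,p}\subset L^{r_1,\alpha_1}$ for $\alpha_1\ge p$. The main obstacle is that $r_1$ may be below $1$ (indeed $\tfrac1{r_1}=\tfrac1p+1-\tfrac sn$ can exceed $1$), where $L^{r_1,\alpha_1}$ is only quasi-normed and Minkowski's integral inequality fails.

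To get around this, I would first use the $r$-convexity of the quasi-norm: $L^{r_1,\alpha_1}$ has an equivalent $\rho$-norm for $\rho<\min(r_1,\alpha_1)$. I would then discretize $g$ into dyadic level pieces, or restrict to $g=\mathbf 1_E$ and use restricted weak-type interpolation in the $g$ slot. If this fails, the fallback is the Kenig–Stein route: decompose $|y|^{s-n}=\sum_k 2^{k(n-s)}\mathbf 1_{|y|\sim 2^{-k}}$, bound each piece using the $L^p\times L^1\to L^{r}$ estimates for the averages $B_k$ (as in the $L^1\times L^1$ lemma above, via Lemma 5 of \cite{KenSte}), and sum by a level-set argument splitting at the scale $2^{-k}\sim \lambda^{-1/(\cdot)}$. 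Symmetry in $f,g$ under $y\mapsto -y$ gives the interchanged statement.

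\textbf{Part (ii).} Here the target exponent $r_{**}=n/(2n-s)<1$. I would first fix $\lambda>0$ and split the kernel at radius $\delta$:
\[
|y|^{s-n}=|y|^{s-n}\mathbf 1_{|y|\le\delta}+|y|^{s-n}\mathbf 1_{|y|>\delta}.
\]
The far part is bounded by $\delta^{s-n}\int f(x-y)g(x+y)\,dy$. This is a bilinear average, and by the Kenig–Stein lemma (or by the substitution above, which shows it lies in $L^{1/2}$ with norm $\lesssim\|f\|_1\|g\|_1$) its level set at height $\lambda/2$ has measure
\[
\lesssim \bigl(\delta^{s-n}\|f\|_1\|g\|_1/\lambda\bigr)^{1/2}.
\]
The near part is a sum of the pieces $2^{k(n-s)}B_k$ over $2^{-k}\le\delta$. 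Using $\|B_k\|_{L^{1/2}}\lesssim 2^{-nk}\|f\|_1\|g\|_1$ and the $\tfrac12$-subadditivity, as in the earlier lemma, gives
\[
\|\text{near}\|_{L^{1/2}}^{1/2}\lesssim \delta^{s/2}\,(\|f\|_1\|g\|_1)^{1/2},
\]
so its level set measure is $\lesssim \delta^{s}\|f\|_1\|g\|_1/\lambda$ after Chebyshev. Optimizing $\delta$ then yields $|\{\mathcal I_s>\lambda\}|\lesssim (\|f\|_1\|g\|_1/\lambda)^{r_{**}}$; checking that the exponents balance to $r_{**}=n/(2n-s)$ is the key computation.
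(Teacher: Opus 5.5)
The paper does not prove this lemma; it quotes it from Kenig--Stein. So your argument has to stand on its own. As written, it has a real gap in part (i) and incorrect estimates in part (ii).

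\emph{Part (i).} Your Minkowski-plus-Lorentz--H\"older argument is valid only for $p>n/s$. In that range $r_1>1$ and $\alpha_1\ge p>1$, so $L^{r_1,\alpha_1}$ is normable. (A small correction: with $w=x+y$ one gets $T_wf(x)=f(2x-w)|x-w|^{s-n}$, which is not a translate of $f(2\cdot)|\cdot|^{s-n}$. H\"older still bounds it uniformly in $w$, so this is harmless.)

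For $1<p\le n/s$, i.e.\ $r_1\le 1$, none of your proposed repairs closes the gap.
\begin{itemize}
\item A $\rho$-norm with $\rho<1$ bounds $\|\sum_j c_jT_{w_j}f\|$ only by $(\sum_j c_j^\rho)^{1/\rho}\sup_w\|T_wf\|$. For $N$ equal masses $c_j=1/N$ this exceeds $\|g\|_{L^1}$ by a factor $N^{1/\rho-1}$. Splitting $g$ into level sets reproduces the same loss.
\item Bounds for $g=\mathbf 1_E$ cannot be extended to all of $L^1$ by interpolating in the $g$ slot. The point $q=1$ is an endpoint with nothing beyond it, and the target is not normable.
\item A level-set argument with the dyadic pieces $B_k$ gives at best $L^{r_1,\infty}$, not the second index $p$.
\end{itemize}

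Two ingredients are missing.
\begin{enumerate}
\item \emph{Weak type for every $1<p<\infty$.} For $x$ in a cube $Q$ of side $\delta$, $B_\delta(f,g)(x)$ involves only $f,g$ on $3Q$. Since $B_\delta(f,g)(x)\le (f*g)(2x)$, Young gives $\|B_\delta(f,g)\|_{L^p}\lesssim\|f\|_{L^p}\|g\|_{L^1}$. Applying H\"older on each $Q$ and then over the cubes yields $\|B_\delta(f,g)\|_{L^t}\lesssim\delta^n\|f\|_{L^p}\|g\|_{L^1}$ with $1/t=1/p+1$. Set $M=\|f\|_{L^p}\|g\|_{L^1}$.
\begin{itemize}
\item Near part ($|y|\le\delta$): summing the dyadic pieces with $t$-subadditivity gives level sets of measure $\lesssim(\delta^sM/\lambda)^t$.
\item Far part ($|y|>\delta$): it is at most $\delta^{s-n}(f*g)(2x)\in L^p$, with level sets $\lesssim(\delta^{s-n}M/\lambda)^p$.
\end{itemize}
Optimizing in $\delta$ gives exactly $(M/\lambda)^{r_1}$.
\item \emph{Lorentz index $p$.} To reach $L^{r_1,p}\subset L^{r_1,\alpha_1}$, fix $g$ and apply the linear off-diagonal Marcinkiewicz theorem to $f\mapsto\mathcal I_s(f,g)$, between weak-type bounds at some $p_0<p<p_1$. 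This gives $L^{p,p}\to L^{r_1,p}$ with constant $\lesssim\|g\|_{L^1}$, and it needs no normability of the target.
\end{enumerate}

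\emph{Part (ii).} The splitting strategy is right, but both level-set estimates are wrong, and with them the exponents do not balance to $r_{**}$. Here $M=\|f\|_{L^1}\|g\|_{L^1}$.
\begin{itemize}
\item The far part $\delta^{s-n}\int f(x-y)g(x+y)\,dy=\delta^{s-n}(f*g)(2x)$ is controlled in $L^1$, not uniformly in $L^{1/2}$. For $f=g=\mathbf 1_{B(0,R)}$ one has $(f*g)(2x)\gtrsim R^n$ on $B(0,R/2)$. Its $L^{1/2}$ quasi-norm is then $\gtrsim R^{3n}$, while $\|f\|_{L^1}\|g\|_{L^1}\simeq R^{2n}$.
\item Chebyshev applied to $\|\text{near}\|_{L^{1/2}}\lesssim\delta^sM$ gives $(\delta^sM/\lambda)^{1/2}$, not $\delta^sM/\lambda$.
\end{itemize}
Balancing your two bounds, which are not dilation invariant, produces $(M/\lambda)^{n/(n+s)}$. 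This differs from $r_{**}$ unless $s=n/2$. With the correct bounds $\delta^{s-n}M/\lambda$ (far) and $(\delta^sM/\lambda)^{1/2}$ (near), the choice $\delta^{2n-s}=M/\lambda$ gives $|\{\mathcal I_s(f,g)>\lambda\}|\lesssim(M/\lambda)^{n/(2n-s)}$, as required. This is the $p=1$ case of the argument sketched above for part (i).
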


\begin{remark}
Recall that the Bessel kernel satisfies
\[
0\le G_s(x)\le C\,|x|^{s-n}=C\,I_s(x),
\qquad x\in\mathbb R^n.
\]
Consequently, for all $f,g\ge0$,
\begin{equation} \label{eq:riesz-dominates}
|\mathcal{J}_s(f,g)(x)|
\le C\,\mathcal{I}_s(|f|,|g|)(x).
\end{equation}
By replacing $f,g$ with $|f|,|g|$, the same inequality holds for general $f,g$.
Since the bounds \eqref{item:lp-l1} and \eqref{item:l1-l1} have been shown to hold for the bilinear Riesz potential operator,
the corresponding bounds hold for the operator $\mathcal{J}_s$.
\end{remark}

\subsection{Interior of the Fractional Index Plane}

In this subsection we prove interpolation results that characterize the boundedness of the bilinear operator for indices on the Fractional Index plane, as defined in the following theorem. 

We first introduce the following useful theorem (see Theorem 7.2.2 in \cite{GraMod}).
\begin{theorem}[Lorentz boundedness on the interpolation surface via three-point restricted weak type]
\label{lem:convex-hull-surface}
Let $0<s<n$ and let $T$ be a bilinear operator which is quasi-sublinear in each variable. 
Assume that $T$ is of restricted weak type $(p_k,q_k,r_k)$ for $k=0,1,2$, i.e.
there exist constants $C_k>0$ such that for all measurable $E,F\subset\mathbb R^n$
of finite measure and all $\lambda>0$,
\begin{equation}\label{eq:rwk-3pts}
\lambda\,
\bigl|\{x\in\mathbb R^n:\ |T(\mathbf 1_E,\mathbf 1_F)(x)|>\lambda\}\bigr|^{1/r_k}
\le C_k\,|E|^{1/p_k}\,|F|^{1/q_k},
\end{equation} and assume that the points $(p_k,q_k,r_k),$ for $k=1,2,3$ are not collinear.

Let $(p,q,r)$ satisfy $1\le p,q\le\infty$ and $0<r<\infty$.
Assume moreover that the point $\Bigl(\frac1p,\frac1q,\frac1r\Bigr)$ lies in the (relative) interior of the convex hull of the three points
$\Bigl(\frac1{p_k},\frac1{q_k},\frac1{r_k}\Bigr)$, for $k=0,1,2$.

Then for every $0<\alpha_1,\alpha_2\le\infty$ and $\alpha$ defined by
\[
\frac1\alpha=\frac1{\alpha_1}+\frac1{\alpha_2},
\qquad \frac1\infty:=0,
\]
there exists a constant $C>0$ such that
\[
\|T(f,g)\|_{L^{r,\alpha}}
\le
C\,
C_0^{\theta_0}C_1^{\theta_1}C_2^{\theta_2}\,
\|f\|_{L^{p,\alpha_1}}\,
\|g\|_{L^{q,\alpha_2}},
\]
where $\theta_0,\theta_1,\theta_2\in(0,1)$ are the barycentric coordinates of
$(1/p,1/q,1/r)$ in the simplex spanned by the three endpoints, i.e.
\[
\theta_0+\theta_1+\theta_2=1,\qquad
\frac1p=\sum_{k=0}^2\frac{\theta_k}{p_k},\quad
\frac1q=\sum_{k=0}^2\frac{\theta_k}{q_k},\quad
\frac1r=\sum_{k=0}^2\frac{\theta_k}{r_k}.
\]
\end{theorem}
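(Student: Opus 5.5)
The plan is to run the standard multilinear Marcinkiewicz argument: restricted weak-type information at three vertices is upgraded to strong Lorentz bounds at interior points. The model is the proof of Theorem 7.2.2 in \cite{GraMod}, adapted from the bilinear ``interpolation segment'' to a two-dimensional simplex.

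\textbf{Step 1: reduce to simple functions and decompose dyadically.} By quasi-sublinearity and density, it suffices to treat nonnegative simple $f,g$. Write
\[
f=\sum_{j\in\mathbb Z}f_j,\qquad f_j=f\mathbf 1_{\{f^*(2^{j+1})<|f|\le f^*(2^j)\}},
\]
so that each $f_j$ is bounded by $f^*(2^j)$ times the indicator of a set $E_j$ with $|E_j|\le 2^{j+1}$. Decompose $g=\sum_k g_k$ in the same way, with sets $F_k$. Since the hypothesis is only on indicators, we first pass from sets to bounded functions: by the layer-cake decomposition of $f_j$ and $g_k$ into indicators, together with the fact that $L^{r,\infty}$ is normable up to constants for $r>1$ (and otherwise using $r$-convexity of $\|\cdot\|_{L^{r,\infty}}^{\rho}$ for suitable $\rho<r$), we get
\[
\|T(f_j,g_k)\|_{L^{r_m,\infty}}\lesssim C_m\, f^*(2^j)g^*(2^k)\,2^{j/p_m}2^{k/q_m},\qquad m=0,1,2.
\]

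\textbf{Step 2: estimate the distribution function at the interior point.} Fix a height $\lambda=2^{l}$ and split the sum over $(j,k)$ into regions of the $(j,k)$-plane adapted to the three vertices. On each region we use the vertex bound whose exponent gives geometric decay in $j$, in $k$, and against $\lambda$. The two-parameter family of cones is dictated by the barycentric coordinates $(\theta_0,\theta_1,\theta_2)$, and we allocate the pieces with weights $\epsilon$ to distribute $\lambda$. Summing the weak-type bounds over the pieces should give a pointwise-in-$l$ estimate of the form
\[
2^{l}\,d_{T(f,g)}(2^l)^{1/r}\lesssim \sum_{j,k} 2^{-\delta(|j-a_l|+|k-b_l|)}\bigl(2^{j/p}f^*(2^j)\bigr)\bigl(2^{k/q}g^*(2^k)\bigr),
\]
where $(a_l,b_l)$ is a lattice line determined by $l$ and $\delta>0$.

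\textbf{Step 3: take the $\ell^\alpha$ norm in $l$ and identify the constant.} Apply \eqref{eq:distr-def-lorentz} to the estimate of Step 2, then use a discrete Young/Hölder inequality. The sequences $\{2^{j/p}f^*(2^j)\}_j$ and $\{2^{k/q}g^*(2^k)\}_k$ have $\ell^{\alpha_1}$ and $\ell^{\alpha_2}$ norms comparable to $\|f\|_{L^{p,\alpha_1}}$ and $\|g\|_{L^{q,\alpha_2}}$, and Hölder with $1/\alpha=1/\alpha_1+1/\alpha_2$ closes the estimate. Finally, rescale by $f\mapsto tf$ and $g\mapsto ug$ and use the one-parameter freedom of the dilation structure to balance the three constants, which yields the factor $C_0^{\theta_0}C_1^{\theta_1}C_2^{\theta_2}$.

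\textbf{Main obstacle.} The hard step is Step 2: producing a genuinely two-dimensional geometric decay from only three vertex estimates. This is exactly where non-collinearity and the relative-interior hypothesis enter, since they ensure that the exponent differences $(1/p_m-1/p,\;1/q_m-1/q,\;1/r_m-1/r)$ positively span the relevant directions. I would first try to reduce to two successive applications of the known two-point (segment) version: interpolate along an edge, then between that edge point and the opposite vertex. However, the intermediate step then yields only Lorentz (not restricted) bounds, so the direct simplex decomposition above is likely necessary.
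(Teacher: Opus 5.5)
The paper does not prove this statement; it quotes Theorem~7.2.2 of Grafakos's \emph{Modern Fourier Analysis}. Your outline follows the standard argument behind such results. However, the step you leave as ``should give'' is the heart of the proof, and as you set it up it does not go through.

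Write $F_j=2^{j/p}f^*(2^j)$, $G_k=2^{k/q}g^*(2^k)$, and $u_m=\frac1{p_m}-\frac1p$, $v_m=\frac1{q_m}-\frac1q$, $w_m=\frac1{r_m}-\frac1r$. Read through the distribution function, the vertex bound $\|T(f_j,g_k)\|_{L^{r_m,\infty}}\lesssim C_mF_jG_k2^{ju_m+kv_m}$ only gives $\lambda\, d_{T(f_j,g_k)}(\lambda)^{1/r}\le\lambda\bigl(C_mF_jG_k2^{ju_m+kv_m}/\lambda\bigr)^{r_m/r}$. This is not linear in $F_jG_k$ when $r_m\neq r$.

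In fact your displayed estimate, with a centre $(a_l,b_l)$ depending only on $l$, is incompatible with scaling. Applied to $(2^mf,g)$ at level $l+m$, it bounds $2^l d_{T(f,g)}(2^l)^{1/r}$ by a sum centred at $(a_{l+m},b_{l+m})$. For simple $f,g$ this sum tends to $0$ as $|m|\to\infty$, once the centre moves with $l$.

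The workable version indexes by measure rather than height. With $h=T(f,g)$ and $h_{jk}=T(f_j,g_k)$, use $h^*(2^l)\le\sum_{j,k}h_{jk}^*(t_{jk})$, where $t_{jk}=c\,2^{\,l-\varepsilon(|j-a_l|+|k-b_l|)}$ and $\sum_{j,k}t_{jk}\le 2^l$, together with $h_{jk}^*(\tau)\le\tau^{-1/r_m}\|h_{jk}\|_{L^{r_m,\infty}}$. Vertex $m$ then contributes $C_mF_jG_k\,2^{ju_m+kv_m-lw_m}$ to $2^{l/r}h^*(2^l)$, up to a factor $c^{-1/r_m}2^{\varepsilon(|j-a_l|+|k-b_l|)/r_m}$. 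This is linear in $F_jG_k$. What must then be proved is $\min_m(ju_m+kv_m-lw_m)\le-\delta(|j-a_l|+|k-b_l|)$.

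That inequality is where the hypotheses enter, and non-collinearity in $\mathbb R^3$ does not give it. Suppose the projections $(\frac1{p_k},\frac1{q_k})$ are affinely independent, and write $\frac1{r_k}=\kappa_0+\frac{\kappa_1}{p_k}+\frac{\kappa_2}{q_k}$. Then $w_m=\kappa_1u_m+\kappa_2v_m$, and the exponent equals $u_m(j-\kappa_1l)+v_m(k-\kappa_2l)$. The $(u_m,v_m)$ are the vertices of a nondegenerate triangle containing $0$ in its interior, so you get the decay with centre $(a_l,b_l)=(\kappa_1l,\kappa_2l)$. Your Step~3 then also needs $\kappa_1\kappa_2\neq0$: if $\kappa_1=0$, a factor $\sum_j2^{-\delta|j-j_0|}F_j$ independent of $l$ appears, and it cannot be summed in $\ell^{\alpha_1}$.

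Both conditions can fail under the stated hypotheses, and then the conclusion is false. Use the coordinates $(\frac1p,\frac1q,\frac1r)$ and let $\phi$ be a Gaussian.
\begin{itemize}
\item $T(f,g)=\bigl(\int_{\mathbb R^n}g\bigr)\,(\phi*f)$ is of restricted weak type at $(1,1,1)$, $(1,1,\frac12)$, $(\frac12,1,\frac12)$. These points are non-collinear but have collinear projections. Yet at the centroid $T$ is not bounded $L^{6/5,\infty}\times L^{1,\infty}\to L^{3/2,\infty}$, since $\int g$ is not controlled by $\|g\|_{L^{1,\infty}}$.
\item $T(f,g)=g\,(\phi*f)$ is of restricted weak type at $(0,\frac14,\frac14)$, $(1,\frac14,\frac14)$, $(\frac12,\frac34,\frac34)$. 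The projection is nondegenerate, but $\kappa_1=0$. Yet $L^{2,2}\times L^{12/5,2}\to L^{12/5,1}$ fails at the centroid: with $f=\mathbf 1_{B(0,1)}$ it would give $\|g\|_{L^{12/5,1}}\lesssim\|g\|_{L^{12/5,2}}$ for every $g$ supported in $B(0,1)$.
\end{itemize}
So the statement as written cannot be proved. A correct version needs both conditions; the paper's application, where $\kappa_1=\kappa_2=1$, does satisfy them.

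Smaller points.
\begin{itemize}
\item Rescaling $f\mapsto tf$, $g\mapsto ug$ cannot produce $C_0^{\theta_0}C_1^{\theta_1}C_2^{\theta_2}$, because both sides of a bilinear inequality scale by $tu$. Instead, set $D_m=(u_m,v_m,w_m)$. Since $\sum_m\theta_mD_m=0$ and the $D_m$ span a plane, there is $z_0\in\mathbb R^3$ with $D_m\cdot z_0=\log_2C_m-\sum_i\theta_i\log_2C_i$. Then $\min_mC_m2^{D_m\cdot z}=C_0^{\theta_0}C_1^{\theta_1}C_2^{\theta_2}\min_m2^{D_m\cdot(z+z_0)}$ for $z=(j,k,-l)$, which only shifts the centre.
\item In Step~1, when $r_m\le1$ (as at the vertex $(1,1,2-\frac sn)$ the paper uses), $\rho$-convexity with $\rho<1$ does not control a layer-cake sum with arbitrary increments. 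Use a binary expansion so that the coefficients are geometric.
\item Restricted weak type does pass to every point of the triangle, simply by taking the weighted geometric mean of the three vertex inequalities. Your concern about edge interpolation is therefore misplaced, although this does not remove the need for the two-parameter decay.
\end{itemize}
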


\begin{lemma}\label{lem:three-point-geometry}
Let $0<s<n$, $1<p,q<\infty$ and define
\[
\frac1r:=\frac1p+\frac1q-\frac{s}{n},
\]
assuming $\frac1p+\frac1q>\frac{s}{n}.$
Then there exists an exponent $p_0\in(1,n/s),$ 
such that the target exponent triple
$\Bigl(\frac1p,\frac1q,\frac1r\Bigr)$
lies in the \emph{relative interior} of the convex hull of three restricted bounded endpoint triples of $\mathcal J_s$.
Consequently, Theorem~\ref{lem:convex-hull-surface} applies to yield for every $0<\alpha_1,\alpha_2\le\infty$,
\[
\mathcal J_s:\ L^{p,\alpha_1}(\mathbb R^n)\times L^{q,\alpha_2}(\mathbb R^n)\to L^{r,\alpha}(\mathbb R^n),
\qquad \frac1\alpha=\frac1{\alpha_1}+\frac1{\alpha_2}.
\]
\end{lemma}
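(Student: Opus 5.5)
The plan is to apply Theorem~\ref{lem:convex-hull-surface} with three endpoints that all lie on the fractional plane $\frac1r=\frac1p+\frac1q-\frac sn$. One endpoint comes from Lemma~\ref{lem:L1LQ}; the other two come from the linear Hardy--Littlewood--Sobolev inequality, and they are where the parameter $p_0\in(1,n/s)$ enters. Throughout, write $a=1/p$ and $b=1/q$, so that $a,b\in(0,1)$ and $a+b>s/n$.

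\textbf{The three endpoints.}
\begin{itemize}
\item[(i)] \emph{The point $(1,1,\frac{2n-s}{n})$.} By part \eqref{item:l1-l1} of Lemma~\ref{lem:L1LQ} together with the domination \eqref{eq:riesz-dominates}, $\mathcal J_s$ maps $L^1\times L^1\to L^{r_{**},\infty}$ with $r_{**}=\frac{n}{2n-s}$.
\item[(ii)] \emph{The point $(\frac1{p_0},0,\frac1{p_0}-\frac sn)$.} Since $G_s\lesssim |y|^{s-n}$, we have
\[
|\mathcal J_s(f,g)(x)|\le \|g\|_{L^\infty}\int G_s(y)|f(x-y)|\,dy\lesssim \|g\|_{L^\infty}\, I_s(|f|)(x),
\]
where $I_s$ is the linear Riesz potential. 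The Hardy--Littlewood--Sobolev inequality then gives $L^{p_0}\times L^\infty\to L^{r_0}$ with $\frac1{r_0}=\frac1{p_0}-\frac sn$. This step needs $1<p_0<n/s$, which is exactly the constraint in the statement.
\item[(iii)] \emph{The point $(0,\frac1{p_0},\frac1{p_0}-\frac sn)$.} This follows from (ii) by symmetry, after the change of variables $y\mapsto -y$.
\end{itemize}
Each of these strong or weak bounds implies restricted weak type. Indeed, $L^{r}\subset L^{r,\infty}$, and $\|\mathbf 1_E\|_{L^p}=|E|^{1/p}$ (with $\|\mathbf 1_F\|_{L^\infty}\le 1=|F|^0$ at the vertices with a zero coordinate). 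So \eqref{eq:rwk-3pts} holds at all three points. The three points are not collinear, because their projections $(1,1)$, $(1/p_0,0)$, $(0,1/p_0)$ to the $(1/p,1/q)$ plane are not collinear.

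\textbf{Choice of $p_0$ (the main point).} Since all three vertices and the target lie on the same plane, which is the graph of $(1/p,1/q)\mapsto \frac1p+\frac1q-\frac sn$, it suffices to place $(a,b)$ in the open planar triangle $T$ with vertices $(1,1)$, $(1/p_0,0)$, $(0,1/p_0)$. The barycentric coordinates are then automatically compatible with the third coordinate. Choose
\[
\frac1{p_0}\in\Bigl(\max\{a,b,\tfrac sn\},\ \min\{1,a+b\}\Bigr).
\]
This interval is nonempty because $a,b<1$, $s/n<1$, $a+b>\max\{a,b\}$ (as $a,b>0$) and $a+b>s/n$. The left endpoint is at least $s/n$, so $p_0<n/s$; the right endpoint is at most $1$, so $p_0>1$.

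\textbf{Interior check.} The open triangle $T$ is cut out by three strict inequalities, one per side, and $(a,b)$ satisfies each:
\begin{itemize}
\item[$\bullet$] Side from $(1/p_0,0)$ to $(0,1/p_0)$: we need $a+b>1/p_0$, which holds by the choice of $p_0$.
\item[$\bullet$] Side from $(1/p_0,0)$ to $(1,1)$: we need $b(1-1/p_0)>a-1/p_0$. The right-hand side is negative because $a<1/p_0$, while the left-hand side is nonnegative.
\item[$\bullet$] Side from $(0,1/p_0)$ to $(1,1)$: this is the same inequality with $a$ and $b$ swapped, and it holds since $b<1/p_0$.
\end{itemize}
Hence the barycentric coordinates $\theta_k$ all lie in $(0,1)$. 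Theorem~\ref{lem:convex-hull-surface} then yields $\mathcal J_s:L^{p,\alpha_1}\times L^{q,\alpha_2}\to L^{r,\alpha}$ for all $0<\alpha_1,\alpha_2\le\infty$ with $\frac1\alpha=\frac1{\alpha_1}+\frac1{\alpha_2}$.

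The only delicate step is this choice of $p_0$. It must simultaneously make the two HLS endpoints admissible ($p_0<n/s$) and keep $(a,b)$ strictly inside the triangle, including when $a+b$ is only slightly larger than $s/n$. The interval choice above handles both requirements at once.
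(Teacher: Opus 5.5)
Your proof is correct and follows the paper's argument essentially verbatim. You use the same three vertices $(1,1,2-\frac sn)$, $(\frac1{p_0},0,\frac1{p_0}-\frac sn)$, $(0,\frac1{p_0},\frac1{p_0}-\frac sn)$, the same interval for $\frac1{p_0}$, and the same appeal to Theorem~\ref{lem:convex-hull-surface}. The only differences are minor: you justify the two side vertices via Riesz domination and Hardy--Littlewood--Sobolev rather than the Aronszajn--Smith weak-type bound (Lemma~\ref{lem:LPLINFTY}), and you check interiority with three half-plane inequalities in the projected triangle instead of writing out the barycentric coordinates $\theta_0,\theta_1,\theta_2$ explicitly.
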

\begin{proof}
Choose $p_0\in(1,n/s)$ such that
\begin{equation}\label{eq:p0-choice-fixed}
\max\Bigl\{\frac1p,\frac1q,\frac{s}{n}\Bigr\}
<
\frac1{p_0}
<
\min\Bigl\{1,\frac1p+\frac1q\Bigr\}.
\end{equation}

Define
\[
P_0=\Bigl(1,1,2-\frac{s}{n}\Bigr),\quad
P_1=\Bigl(\frac1{p_0},0,\frac1{p_0}-\frac{s}{n}\Bigr),\quad
P_2=\Bigl(0,\frac1{p_0},\frac1{p_0}-\frac{s}{n}\Bigr).
\]
By Lemma~\ref{lem:L1LQ}(\ref{item:l1-l1}) together with the pointwise domination
\eqref{eq:riesz-dominates}, the operator $\mathcal J_s$ satisfies weak type bound. Hence $\mathcal J_s$ is of restricted weak type at $P_0$.
By Lemma~\ref{lem:LPLINFTY}, the operator $\mathcal J_s$ satisfies the restricted weak-type bounds at $P_1$ and $P_2$.

Set
\[
\theta_0:=\frac{\left(\frac1p+\frac1q\right)-\frac1{p_0}}{2-\frac1{p_0}},
\qquad
\theta_1:=\frac{\frac1p-\theta_0}{\frac1{p_0}},
\qquad
\theta_2:=\frac{\frac1q-\theta_0}{\frac1{p_0}} .
\]

A direct computation gives
$\theta_0+\theta_1+\theta_2=1$. 

By \eqref{eq:p0-choice-fixed},
$\frac1{p_0}<\frac1p+\frac1q$, hence $\theta_0>0$.
Moreover, $\frac1{p_0}>\frac1q$ implies $\theta_0<\frac1p$, so
$\theta_1>0$. By symmetry, $\theta_2>0$.

The first coordinate equals $\frac1p$, the second equals $\frac1q$, and
$2\theta_0+\frac1{p_0}(\theta_1+\theta_2)
=\frac1p+\frac1q$.

Hence the third coordinate equals
$\frac1p+\frac1q-\frac{s}{n}=\frac1r$.
Therefore,
\[
\theta_0P_0+\theta_1P_1+\theta_2P_2
=\Bigl(\frac1p,\frac1q,\frac1r\Bigr),
\]
and by Theorem \ref{lem:convex-hull-surface}, the conclusion follows.
\end{proof}

\section{Between Two Planes}

In this section, we investigate the boundedness of the bilinear Bessel operator in the case when indices can be placed between the Lebesgue and the Fractional Index Plane, and classify the behaviour at relevant endpoints. 
\subsection{$p=\infty$ or $q=\infty$}
We first recall the bounds for the linear Bessel potential, which can be found in the work of Aronszajn and Smith (see \cite{AroSmi}, Section~10, p.~470).
\begin{theorem}[Bounds for Bessel potentials, Aronszajn and Smith, 1961]
Let $0<s<n$. Then the linear Bessel potential satisfies:
\begin{enumerate}
    \item For every $1\le p\le\infty$,
\[
\|J_s f\|_{L^p}
\le
\|G_s\|_{L^1}\,\|f\|_{L^p};
\]
\item Assume
\[
1\le p<\frac{n}{s},
\qquad
\frac1{r_*}=\frac1p-\frac{s}{n}
\quad
\Bigl(r_*=\frac{np}{n-sp}\Bigr).
\]
Then
\[
\|J_s f\|_{L^{r_*,\infty}}
\le
C_{n,s,p}\,\|f\|_{L^p}.
\]
\end{enumerate}
\end{theorem}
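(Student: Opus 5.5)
Both parts rest on the structure of the Bessel kernel recorded in Lemma~\ref{lem:bessel-kernel-Lr}: $G_s\ge 0$, $G_s\in L^1$, $G_s(x)\lesssim |x|^{s-n}$ near the origin, and $G_s$ decays exponentially at infinity. The plan is to prove (1) by Fourier positivity and Minkowski's inequality, and (2) by splitting the kernel at a variable radius (Hedberg's trick).

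For part (1), first note that $G_s\ge0$ and
\[
\|G_s\|_{L^1}=\int G_s=\widehat{G_s}(0)=1 .
\]
Since $J_sf=G_s*f$, Minkowski's integral inequality gives, for every $1\le p\le\infty$,
\[
\|J_sf\|_{L^p}\le \int G_s(y)\,\|f(\cdot-y)\|_{L^p}\,dy=\|G_s\|_{L^1}\|f\|_{L^p}.
\]
Young's inequality gives the same bound. This settles (1).

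For part (2), use the domination $G_s(x)\le C|x|^{s-n}$, which holds for all $x$ because the exponential tail is dominated by $|x|^{s-n}$ for $|x|\ge1$. Consequently $|J_sf|\le C\, I_s(|f|)$, where $I_s$ is the linear Riesz potential.

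When $1<p<n/s$, the Hardy--Littlewood--Sobolev inequality gives the strong bound $L^p\to L^{r_*}$, which implies the weak bound. This can also be seen from O'Neil's inequality with $|x|^{s-n}\in L^{n/(n-s),\infty}$ and $\alpha=r_*\ge p$. When $p=1$, the classical weak-type $(1,\frac{n}{n-s})$ estimate for $I_s$ applies.

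For a self-contained argument, I would use Hedberg's splitting, which covers all $1\le p<n/s$ at once. For a radius $\delta>0$, write
\[
|J_sf(x)|\le \int_{|y|<\delta}+\int_{|y|\ge\delta}.
\]
The near part is bounded by $C\delta^{s}Mf(x)$ via a dyadic decomposition of the ball. The far part is bounded by Hölder's inequality:
\[
\int_{|y|\ge\delta}|y|^{s-n}|f(x-y)|\,dy\lesssim \delta^{s-n/p}\|f\|_{L^p},
\]
since $(n-s)p'>n$ exactly when $p<n/s$. For $p=1$ this is simply the sup of the kernel, $\delta^{s-n}$.

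Optimizing in $\delta$ gives the pointwise bound
\[
|J_sf(x)|\lesssim Mf(x)^{1-sp/n}\,\|f\|_{L^p}^{sp/n}.
\]
The weak $L^{r_*}$ estimate then follows from the weak-$(p,p)$ bound for the maximal function $M$, since
\[
\{|J_sf|>\lambda\}\subset \bigl\{Mf>c(\lambda\|f\|_{L^p}^{-sp/n})^{n/(n-sp)}\bigr\}.
\]

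The main point of care is the endpoint $p=1$. There only weak type holds, and the argument must rely on the weak $(1,1)$ bound for $M$; the strong HLS inequality is not available.

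The exponent bookkeeping $r_*=\frac{np}{n-sp}$ is routine: it comes out of the choice $\delta=(\|f\|_{L^p}/Mf(x))^{p/n}$.
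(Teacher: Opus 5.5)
The paper gives no proof of this theorem. It is quoted from Aronszajn--Smith and used only as input for Lemma~\ref{lem:LPLINFTY}. Your argument is a correct, self-contained replacement built from ingredients the paper already has.

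Part (1) is Minkowski's integral inequality, and indeed $\|G_s\|_{L^1}=\widehat{G_s}(0)=1$ because $G_s\ge 0$.

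For part (2) you use the same global domination $G_s(x)\le C|x|^{s-n}$ that the paper later invokes in \eqref{eq:riesz-dominates}. Your Hedberg computation checks out:
\begin{enumerate}
\item The near part is $\lesssim\delta^{s}Mf(x)$.
\item The far part is $\lesssim\delta^{s-n/p}\|f\|_{L^p}$, since $(n-s)p'>n$ exactly when $p<n/s$; for $p=1$ the sup of the kernel gives $\delta^{s-n}\|f\|_{L^1}$.
\item The choice $\delta=(\|f\|_{L^p}/Mf(x))^{p/n}$ gives $|J_sf|\lesssim (Mf)^{1-sp/n}\|f\|_{L^p}^{sp/n}$.
\item The weak $(p,p)$ bound for $M$ then yields $|\{|J_sf|>\lambda\}|\lesssim(\|f\|_{L^p}/\lambda)^{r_*}$, because $p\cdot\frac{n}{n-sp}=r_*$.
\end{enumerate}

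What this route buys is a uniform treatment of all $1\le p<n/s$, including $p=1$. The O'Neil inequality quoted in the paper cannot reach $p=1$, since it requires $1<p$.

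Your O'Neil remark is still worth keeping. For $1<p<n/s$, combining $G_s\in L^{\frac{n}{n-s},\infty}$ (Lemma~\ref{lem:bessel-kernel-Lr}) with O'Neil gives $J_s:L^p\to L^{r_*,\alpha}$ for every $\alpha\ge p$. This is stronger than the weak-type bound stated here. Via $|\mathcal J_s(f,g)|\le\|g\|_{L^\infty}J_s|f|$, it also supplies the positive side of the restriction $\alpha\ge p$. The paper's sharpness lemma attributes that restriction to Lemma~\ref{lem:LPLINFTY}, which as stated records only $\alpha=\infty$.

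The paper's citation, by contrast, buys only brevity.
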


By the pointwise domination, we immediately obtain:
\[
|\mathcal{J}_s(f,g)(x)|
\le \|g\|_{L^\infty}\,(G_s*|f|)(x)
=\|g\|_{L^\infty}\,(J_s|f|)(x). 
\]
As a consequence, we note the following lemma.

\begin{lemma}
\label{lem:LPLINFTY}
Let $0<s<n$, $1\le p<\frac{n}{s}$ and set $r^p_*=\frac{np}{n-sp}$, i.e.
\[
\frac1{r^p_*}=\frac1p-\frac{s}{n}.
\]
Then the following bounds hold:
\[
\mathcal J_s:\; L^p(\mathbb R^n)\times L^\infty(\mathbb R^n)\to L^{r^p_*,\infty}(\mathbb R^n)
\]
 and
\[
\mathcal J_s:\; L^p(\mathbb R^n)\times L^\infty(\mathbb R^n)\to L^p(\mathbb R^n).
\]

Moreover, the same bounds hold after interchanging the two input spaces.
\end{lemma}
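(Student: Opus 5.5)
The plan is to reduce both bounds to the linear theory through the pointwise domination recorded just before the statement. For $f\in L^p$ and $g\in L^\infty$, the bound $|g(x+y)|\le\|g\|_{L^\infty}$ holds for almost every $y$ at each fixed $x$. Since $G_s\ge0$, pulling this out of the integral gives
\[
|\mathcal J_s(f,g)(x)|\le \int_{\mathbb R^n}G_s(y)\,|f(x-y)|\,|g(x+y)|\,dy\le \|g\|_{L^\infty}\,(G_s*|f|)(x)=\|g\|_{L^\infty}\,J_s|f|(x).
\]
The operator $\mathcal J_s$ is thus dominated by a constant multiple of the linear Bessel potential applied to $|f|$. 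The rest of the argument transfers linear estimates through this inequality.

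For the strong $L^p$ bound, I apply part (1) of the Aronszajn--Smith theorem to $|f|$. Because $\||f|\|_{L^p}=\|f\|_{L^p}$ and the $L^p$ norm is monotone under pointwise domination, this gives
\[
\|\mathcal J_s(f,g)\|_{L^p}\le \|G_s\|_{L^1}\|f\|_{L^p}\|g\|_{L^\infty}.
\]
This step works for every $1\le p\le\infty$. Alternatively, it follows from Minkowski's integral inequality, using $G_s\in L^1$ from Lemma~\ref{lem:bessel-kernel-Lr}.

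For the weak bound, I use that the quasi-norm of $L^{r,\infty}$ is monotone: if $|h_1|\le|h_2|$ pointwise, then $d_{h_1}\le d_{h_2}$, so $\|h_1\|_{L^{r,\infty}}\le\|h_2\|_{L^{r,\infty}}$. Part (2) of the Aronszajn--Smith theorem, applied to $|f|$ with $1\le p<n/s$, then gives
\[
\|\mathcal J_s(f,g)\|_{L^{r^p_*,\infty}}\le \|g\|_{L^\infty}\,\|J_s|f|\|_{L^{r^p_*,\infty}}\le C_{n,s,p}\|f\|_{L^p}\|g\|_{L^\infty}.
\]

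For the interchanged case, $f\in L^\infty$ and $g\in L^p$, the same estimate gives $|\mathcal J_s(f,g)(x)|\le\|f\|_{L^\infty}\int G_s(y)|g(x+y)|\,dy$. The change of variables $y\mapsto -y$ turns the integral into $(G_s*|g|)(x)$, because $G_s$ is radial: its Fourier transform is radial, so $G_s(-y)=G_s(y)$. The two previous bounds then apply verbatim.

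I expect no serious obstacle. The only points needing care are measure-theoretic: the essential-supremum bound must hold for almost every $y$ at each fixed $x$, which is enough inside the integral. One must also note that $J_s|f|$ is well defined almost everywhere, which follows from the linear bounds themselves.
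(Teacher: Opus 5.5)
Your proposal is correct and follows the paper's argument: the pointwise domination $|\mathcal J_s(f,g)|\le \|g\|_{L^\infty}\,J_s|f|$, combined with the two Aronszajn--Smith bounds for the linear Bessel potential. The details you add, namely the monotonicity of the $L^{r,\infty}$ quasi-norm and the evenness of $G_s$ for the interchanged case, are exactly what the paper leaves implicit.
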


\subsection{Triangle}
To complete the picture of boundedness, we next consider the boundedness of the operator into $L^\infty.$
\begin{lemma}[$L^p\times L^q\to L^\infty$ bound below the critical line]
\label{lem:Js_LpLq_Linfty}
Let $0<s<n$ and let $1\le p,q\le \infty$. Assume
\begin{equation}\label{eq:subcritical-Linfty}
0\le \frac1p+\frac1q<\frac{s}{n}.
\end{equation}
Then
\[
\mathcal J_s:
L^p(\mathbb R^n)\times L^q(\mathbb R^n)
\longrightarrow
L^\infty(\mathbb R^n).
\]
\end{lemma}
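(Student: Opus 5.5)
The plan is a direct pointwise estimate via H\"older's inequality with three factors. Fix $x\in\mathbb R^n$ and define the exponent $t$ by
\[
\frac1t := 1-\frac1p-\frac1q .
\]
The hypothesis \eqref{eq:subcritical-Linfty} gives $\frac1p+\frac1q<\frac sn<1$, so $\frac1t>1-\frac sn=\frac{n-s}{n}$. Hence $1\le t<\frac{n}{n-s}$, with $t=1$ exactly when $p=q=\infty$. Applying H\"older's inequality with exponents $t,p,q$ to the three functions $G_s$, $f(x-\cdot)$, $g(x+\cdot)$ gives
\[
|\mathcal J_s(f,g)(x)|\le \int_{\mathbb R^n} G_s(y)\,|f(x-y)|\,|g(x+y)|\,dy
\le \|G_s\|_{L^t}\,\|f(x-\cdot)\|_{L^p}\,\|g(x+\cdot)\|_{L^q}.
\]
By translation and reflection invariance of Lebesgue norms, the last two factors equal $\|f\|_{L^p}$ and $\|g\|_{L^q}$. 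Taking the supremum over $x$ then yields
\[
\|\mathcal J_s(f,g)\|_{L^\infty}\le \|G_s\|_{L^t}\|f\|_{L^p}\|g\|_{L^q}.
\]

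It remains to see that the constant $\|G_s\|_{L^t}$ is finite. This follows from \eqref{eq:Gs-Lr-range} in Lemma~\ref{lem:bessel-kernel-Lr}, since $1\le t<\frac{n}{n-s}$. The only place where care is needed is this step: it is exactly where the strict inequality in \eqref{eq:subcritical-Linfty} is used. At the critical line $\frac1p+\frac1q=\frac sn$ one would need $t=\frac{n}{n-s}$, and there $G_s$ lies only in $L^{\frac{n}{n-s},\infty}$, consistent with the failure announced for the critical case.

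The degenerate cases need a brief check that H\"older still applies. When $p=\infty$ or $q=\infty$, the corresponding factor is bounded by its sup norm, and the argument reduces to two-factor H\"older; the exponents still satisfy $\frac1t+\frac1p+\frac1q=1$ with the convention $\frac1\infty=0$. When $p=q=\infty$ we have $t=1$, and the bound $\|G_s\|_{L^1}\|f\|_{L^\infty}\|g\|_{L^\infty}$ holds trivially. Finally, the estimate shows the integral defining $\mathcal J_s(f,g)(x)$ converges absolutely for every $x$, so the operator is well defined pointwise on $L^p\times L^q$ and no density argument is needed.
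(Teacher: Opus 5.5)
Your proof is correct and is essentially the paper's argument. The paper first combines $f(x-\cdot)g(x+\cdot)$ into an $L^{t}$ function with $\frac1t=\frac1p+\frac1q$, then pairs it with $G_s\in L^{t'}$ via Lemma~\ref{lem:bessel-kernel-Lr}; its $t'$ is exactly your exponent, so your three-factor H\"older inequality is the same estimate done in one step.
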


\begin{proof}
Fix $x\in\mathbb R^n$ and set
$h_x(y):=f(x-y)\,g(x+y)$.

Define $t\in[1,\infty]$ by $\frac1t=\frac1p+\frac1q$,
and let $t'$ be its conjugate exponent, with the convention $1/\infty=0$. 
Then by Hölder's inequality,
\[
\|h_x\|_{L^t}
\le \|f(x-\cdot)\|_{L^p}\,\|g(x+\cdot)\|_{L^q}
=\|f\|_{L^p}\,\|g\|_{L^q},
\]
uniformly in $x$. Hence,
\[
|\mathcal J_s(f,g)(x)|
\le \int_{\mathbb R^n} G_s(y)\,|h_x(y)|\,dy
\le \|G_s\|_{L^{t'}}\,\|h_x\|_{L^t}
\le \|G_s\|_{L^{t'}}\,\|f\|_{L^p}\,\|g\|_{L^q}.
\]
Taking the supremum over $x$ yields the claim. Indeed, by
Lemma~\ref{lem:bessel-kernel-Lr} we have $G_s \in L^{t'}$.
Note that \eqref{eq:subcritical-Linfty} implies the inequality
$t' < \frac{n}{n-s}$.
Consequently, Lemma~\ref{lem:bessel-kernel-Lr} applies and ensures that
$G_s \in L^{t'}$.
\end{proof}

\subsection{Fixed-input interpolation}
We have proved that the bilinear Bessel potential $\mathcal J_s$ satisfies
\[
\|\mathcal J_s(f,g)\|_{L^{r_1}}
\le A\,\|f\|_{L^p}\|g\|_{L^q},\qquad\frac{1}{p}+\frac{1}{q}=\frac{1}{r_1},
\]
and
\[
\|\mathcal J_s(f,g)\|_{L^{r_2,\infty}}
\le B\,\|f\|_{L^p}\|g\|_{L^q},\qquad\frac{1}{p}+\frac{1}{q}=\frac{1}{r_2}+\frac{s}{n}.
\] 
We extend the result to intermediate spaces, by interpolation.
\begin{lemma}
\label{lem:interp-LPLQ}
Let $0<s<n$, $1\le p,q\le\infty$.
Then for every $0<\theta<1$ and every $0<\alpha\le\infty$,
\[
\|\mathcal J_s(f,g)\|_{L^{r_\theta,\alpha}}
\lesssim
A^{1-\theta}B^\theta\,\|f\|_{L^p}\|g\|_{L^q},
\]
where
\[
\frac1{r_\theta}
=
\frac{1-\theta}{r_1}
+
\frac{\theta}{r_2}.
\]
\end{lemma}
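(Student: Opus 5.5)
The plan is to fix $f\in L^p$, $g\in L^q$ with $\|f\|_{L^p}=\|g\|_{L^q}=1$ (by homogeneity) and set $h=\mathcal J_s(f,g)$. We then interpolate directly at the level of the single function $h$, using the two available bounds
\[
\|h\|_{L^{r_1}}\le A,\qquad \|h\|_{L^{r_2,\infty}}\le B .
\]
Since $\frac1{r_2}=\frac1{r_1}-\frac sn<\frac1{r_1}$, we have $r_1<r_2$ and $r_1<r_\theta<r_2$. The claim then reduces to a log-convexity (Hölder-type) inequality for Lorentz quasinorms of one function: any $h\in L^{r_1,\infty}\cap L^{r_2,\infty}$ lies in $L^{r_\theta,\alpha}$ for every $0<\alpha\le\infty$, with
\[
\|h\|_{L^{r_\theta,\alpha}}\le C(r_1,r_2,\theta,\alpha)\,\|h\|_{L^{r_1,\infty}}^{1-\theta}\|h\|_{L^{r_2,\infty}}^{\theta}.
\]
This suffices, because $\|h\|_{L^{r_1,\infty}}\le\|h\|_{L^{r_1}}\le A$.

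To prove this inequality, I will use the distribution-function formula \eqref{eq:distr-def-lorentz}. The weak bounds give
\[
d_h(\lambda)\le \min\bigl(A^{r_1}\lambda^{-r_1},\,B^{r_2}\lambda^{-r_2}\bigr).
\]
The two bounds cross at $\lambda_0$ with $A^{r_1}\lambda_0^{-r_1}=B^{r_2}\lambda_0^{-r_2}$. I then split the integral
\[
\int_0^\infty \bigl(d_h(\lambda)^{1/r_\theta}\lambda\bigr)^\alpha\frac{d\lambda}{\lambda}
\]
at $\lambda_0$, using the bound in $A$ for $\lambda<\lambda_0$ and the bound in $B$ for $\lambda>\lambda_0$. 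On $(0,\lambda_0)$ the integrand is $\lesssim A^{\alpha r_1/r_\theta}\lambda^{\alpha(1-r_1/r_\theta)-1}$, which is integrable at $0$ since $r_1<r_\theta$. On $(\lambda_0,\infty)$ it is $\lesssim B^{\alpha r_2/r_\theta}\lambda^{\alpha(1-r_2/r_\theta)-1}$, which is integrable at $\infty$ since $r_2>r_\theta$. Evaluating both pieces at $\lambda_0$ and using $\frac1{r_\theta}=\frac{1-\theta}{r_1}+\frac\theta{r_2}$ gives the total $\lesssim (A^{1-\theta}B^\theta)^\alpha$. This is a routine exponent computation; the constant blows up only as $\theta\to0,1$. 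The case $\alpha=\infty$ is the same pointwise estimate with a supremum in place of the integral.

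The main obstacle is the endpoint configurations, where the two given estimates are not both available with finite exponents. If $\frac1p+\frac1q\le\frac sn$, then $r_2$ is not a finite positive exponent. If $p=q=\infty$ or $\frac1p+\frac1q=\frac sn$, then $r_2=\infty$; in the first case the $L^\infty$ bound comes from Lemma~\ref{lem:Js_LpLq_Linfty}, and in the second case it fails. If $r_1<1$ (for instance $p=q=1$), there is no problem, since the distribution argument is valid for quasinorms.

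I will therefore state the lemma under the standing hypothesis that both displayed bounds hold with $0<r_1<r_2<\infty$. That is, $\frac1p+\frac1q>\frac sn$, with $A$ supplied by Corollary~\ref{cor:bessel-strong} and the $L^1\times L^1$ lemma, and $B$ supplied by Lemmas~\ref{lem:L1LQ}, \ref{lem:three-point-geometry} and \ref{lem:LPLINFTY}. When $r_2=\infty$ and an $L^\infty$ bound $B$ holds, as in the subcritical case of Lemma~\ref{lem:Js_LpLq_Linfty}, the same splitting works with $d_h(\lambda)=0$ for $\lambda>B$, and the computation is simpler. In every case, no multilinear interpolation theorem is needed, only this one-function estimate.
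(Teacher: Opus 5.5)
Your argument is correct and is essentially the paper's proof. The paper splits at the crossing point of the two bounds in the same way, but writes them for the decreasing rearrangement, $h^*(t)\le\min(At^{-1/r_1},Bt^{-1/r_2})$, rather than for the distribution function. Like you, it uses only the weak $L^{r_1}$ consequence of the strong bound.

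Your standing hypothesis $\frac1p+\frac1q>\frac sn$, which ensures $0<r_1<r_2<\infty$, is a genuine clarification that the paper's statement leaves implicit. One small slip in your side remark: for $p=q=\infty$ the fractional relation gives $\frac1{r_2}=-\frac sn<0$, not $r_2=\infty$, so that case belongs with $\frac1p+\frac1q<\frac sn$.
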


\begin{proof}
Let $h=\mathcal J_s(f,g)$.
By hypothesis,
\[
\|h\|_{L^{r_1}}
\le A\|f\|_{L^p}\|g\|_{L^q},
\qquad
\|h\|_{L^{r_2,\infty}}
\le B\|f\|_{L^p}\|g\|_{L^q}.
\]

Let $h^*$ denote the decreasing rearrangement of $h$.

From Chebyshev's inequality and the strong $L^{r_1}$ bound,
\[
h^*(t)
\le
\|h\|_{L^{r_1}}\,t^{-1/r_1}
\le
A\|f\|_{L^p}\|g\|_{L^q}\,t^{-1/r_1},
\qquad t>0.
\]
Similarly, from the weak-type bound,
\[
h^*(t)
\le
\|h\|_{L^{r_2,\infty}}\,t^{-1/r_2}
\le
B\|f\|_{L^p}\|g\|_{L^q}\,t^{-1/r_2},
\qquad t>0.
\]
Therefore,
\[
h^*(t)
\le
\min\Bigl(
A\|f\|_{L^p}\|g\|_{L^q}\,t^{-1/r_1},
\;
B\|f\|_{L^p}\|g\|_{L^q}\,t^{-1/r_2}
\Bigr).
\]

Let $0<\theta<1$ and define $r_\theta$ by
$\frac1{r_\theta}
=
\frac{1-\theta}{r_1}
+
\frac{\theta}{r_2}$.

Let $t_0>0$ be determined by
\[
A t_0^{-1/r_1}=B t_0^{-1/r_2},\qquad
t_0^{\,1/r_2-1/r_1}=\frac{A}{B}.
\]

Assume first that $0<\alpha<\infty$.
\begin{align*}
\|h\|_{L^{r_\theta,\alpha}}^\alpha&=
\int_0^\infty
\bigl(t^{1/r_\theta}h^*(t)\bigr)^\alpha
\frac{dt}{t}\\
&\le
\int_0^{t_0}
\bigl(B\|f\|_{L^p}\|g\|_{L^q}\,
t^{1/r_\theta-1/r_2}\bigr)^\alpha
\frac{dt}{t}\\
&+
\int_{t_0}^{\infty}
\bigl(A\|f\|_{L^p}\|g\|_{L^q}\,
t^{1/r_\theta-1/r_1}\bigr)^\alpha
\frac{dt}{t}.
\end{align*}

Since $r_1<r_2$
\[
\frac1{r_\theta}-\frac1{r_2}>0,
\qquad
\frac1{r_\theta}-\frac1{r_1}<0,
\]
both integrals converge.

A direct computation gives
\[
\|h\|_{L^{r_\theta,\alpha}}^\alpha
\le C
(A\|f\|_{L^p}\|g\|_{L^q})^{\alpha(1-\theta)}
(B\|f\|_{L^p}\|g\|_{L^q})^{\alpha\theta},
\]
where $C=C(n,s,p,q)$ a constant determined only on $n,s,p,q$.
Taking $\alpha$-th roots yields
\[
\|h\|_{L^{r_\theta,\alpha}}
\lesssim
A^{1-\theta}B^\theta
\|f\|_{L^p}\|g\|_{L^q}.
\]

When $\alpha=\infty$, we use
\[
\|h\|_{L^{r_\theta,\infty}}
=
\sup_{t>0} t^{1/r_\theta}h^*(t),
\]
together with the same pointwise bound.
The supremum is attained up to constants at $t\approx t_0$,
which gives the same estimate. \end{proof}

\section{Sharpness and Counterexamples}

In this section, we construct concrete counterexamples to show the failure of boundedness of operators on a line we call a critical line. We then establish sharpness for previous fractional index bounds.
\subsection{Failure on the critical line}
We first prove failure of boundedness on the critical line, where for $0<s<n,$ and $1\leq p,q\leq \infty,$ $\frac{1}{p}+\frac{1}{q}=\frac{s}{n}.$
\begin{proposition}[Failure of $L^p\times L^q\to L^\infty$ on the critical line]
\label{prop:fail-critical-Linfty}
Let $0<s<n$ and let $1\le p,q\le\infty$ satisfy
\begin{equation}\label{eq:critical-line}
\frac1p+\frac1q=\frac{s}{n},
\end{equation}
excluding the trivial case $(p,q)=(\infty,\infty)$.
Then the bilinear Bessel potential
\[
\mathcal{J}_s(f,g)(x)
=\int_{\mathbb R^n} G_s(y)\,f(x-y)\,g(x+y)\,dy
\]
does \emph{not} define a bounded operator
\[
\mathcal{J}_s:\ L^p(\mathbb R^n)\times L^q(\mathbb R^n)\longrightarrow L^\infty(\mathbb R^n).
\]
\end{proposition}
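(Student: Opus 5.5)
We will build, for each large parameter, a pair of functions whose $L^p$ and $L^q$ norms stay bounded while the value of $\mathcal J_s(f,g)$ at the origin blows up. The blow-up will come from the local singularity $G_s(y)\simeq |y|^{s-n}$ near $y=0$. Lemma~\ref{lem:bessel-kernel-Lr} gives the upper bound, and we also use the matching lower bound $G_s(y)\gtrsim |y|^{s-n}$ for $|y|\le 1$, already used in Proposition~\ref{prop:necessary-conds-weak}.

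Since $\frac1p+\frac1q=\frac sn$, we can write $\frac1p=\frac{\theta s}{n}$ and $\frac1q=\frac{(1-\theta)s}{n}$ with $\theta\in[0,1]$. For $0<\varepsilon<1$ we take the radial truncated powers
\[
f_\varepsilon(y)=|y|^{-\theta s}\,\mathbf 1_{\{\varepsilon<|y|<1\}},\qquad g_\varepsilon(y)=|y|^{-(1-\theta)s}\,\mathbf 1_{\{\varepsilon<|y|<1\}}.
\]
If $\theta=0$ we instead set $f_\varepsilon=\mathbf 1_{\{|y|<1\}}$ with the same $g_\varepsilon$, and symmetrically if $\theta=1$.

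Both functions are even, so $f_\varepsilon(-y)g_\varepsilon(y)=|y|^{-s}$ on the annulus $\varepsilon<|y|<1$. This gives
\[
\mathcal J_s(f_\varepsilon,g_\varepsilon)(0)=\int G_s(y)f_\varepsilon(-y)g_\varepsilon(y)\,dy\gtrsim\int_{\varepsilon<|y|<1}|y|^{s-n}|y|^{-s}\,dy\simeq \log(1/\varepsilon).
\]
For the norms, $\|f_\varepsilon\|_{L^p}^p=\int_{\varepsilon<|y|<1}|y|^{-n}\,dy\simeq\log(1/\varepsilon)$, and likewise $\|g_\varepsilon\|_{L^q}\simeq \log(1/\varepsilon)^{1/q}$. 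If $p$ or $q$ equals $\infty$, the corresponding norm is $1$. Hence
\[
\|f_\varepsilon\|_{L^p}\|g_\varepsilon\|_{L^q}\lesssim \log(1/\varepsilon)^{1/p+1/q}=\log(1/\varepsilon)^{s/n},
\]
and the exponent $s/n$ is strictly less than $1$. Therefore the ratio $\mathcal J_s(f_\varepsilon,g_\varepsilon)(0)\big/\bigl(\|f_\varepsilon\|_{L^p}\|g_\varepsilon\|_{L^q}\bigr)$ grows like $\log(1/\varepsilon)^{1-s/n}\to\infty$. This contradicts boundedness into $L^\infty$.

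The main obstacle is that an $L^\infty$ bound controls an essential supremum, not a value at a single point. To handle this, we show the lower bound persists on a neighborhood of the origin. Fix $\varepsilon$; for $|x|<\varepsilon/10$ and $2\varepsilon<|y|<1/2$ we have $|x\pm y|\simeq |y|$. Then $f_\varepsilon(x-y)g_\varepsilon(x+y)\gtrsim |y|^{-s}$ with constants uniform in $\varepsilon$. Consequently $\mathcal J_s(f_\varepsilon,g_\varepsilon)(x)\gtrsim\log(1/\varepsilon)$ on the ball $B(0,\varepsilon/10)$, which has positive measure. Nonnegativity of all the functions involved makes this lower bound legitimate. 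Finally, $f_\varepsilon,g_\varepsilon$ are bounded and compactly supported. If boundedness is only assumed on $\mathcal S$, we either mollify them, which preserves the lower bound by Fatou and monotonicity, or note that the statement concerns $L^p\times L^q$ directly.
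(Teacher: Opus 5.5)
Your proof is correct, and it takes a different route from the paper.

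The paper splits into cases. For $1<p,q<\infty$ it uses a \emph{single} pair of log-corrected powers, $f(y)=|y|^{-n/p}(\log\frac{e}{|y|})^{-\beta}\mathbf{1}_{|y|<1}$ and $g(y)=|y|^{-n/q}(\log\frac{e}{|y|})^{-\gamma}\mathbf{1}_{|y|<1}$, with $\beta p>1$, $\gamma q>1$, $\beta+\gamma\le 1$. Then $f\in L^p$ and $g\in L^q$, but $\mathcal J_s(f,g)(0)=\infty$. At the endpoints $(n/s,\infty)$ and $(\infty,n/s)$ it takes $g\equiv 1$ and shows $(G_s*f)(0)=\infty$ for $f(y)=|y|^{-s}(\log\frac{e}{|y|})^{-1}\mathbf{1}_{|y|<\rho}$. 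That is, it reduces the endpoint to the failure of the linear bound $J_s:L^{n/s}\to L^\infty$.

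You instead use a one-parameter family of truncated pure powers. It handles every point of the critical line at once, with the endpoints being $\theta\in\{0,1\}$, and it makes the failure quantitative. The input norms grow like $\log(1/\varepsilon)^{s/n}$ while the output grows like $\log(1/\varepsilon)$, and $s<n$ finishes the argument.

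The paper's approach gives a formally stronger conclusion: an explicit pair in $L^p\times L^q$ on which the defining integral diverges. Yours refutes the a priori inequality, which is exactly what unboundedness means, so it suffices.

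Your neighborhood estimate on $B(0,\varepsilon/10)$ is a real strength. The paper passes directly from divergence at the single point $x=0$ to $\mathcal J_s(f,g)\notin L^\infty$, and in the endpoint case to a contradiction with an $L^\infty$ bound. This overlooks that $L^\infty$ only sees essential suprema. To be complete, that step needs an argument like yours, or Fatou's lemma combined with the lower semicontinuity of the paper's $f,g$.
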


\begin{proof}
We use the fact that $G_s$ is nonnegative and satisfies the local lower bound
\begin{equation}\label{eq:Gs-lower-critical}
G_s(y)\ge c\,|y|^{s-n}\qquad(0<|y|<1)
\end{equation}
for some $c>0$.

\medskip
\noindent\textbf{Case 1: $1<p,q<\infty$.}
Choose $\beta,\gamma>0$ such that
\[
\beta p>1,\qquad \gamma q>1,\qquad \beta+\gamma\le 1.
\]
(This is possible since $1/p+1/q=s/n<1$, hence $1/p+1/q<1$ and we may take
$\beta\downarrow 1/p$ and $\gamma\downarrow 1/q$ so that $\beta+\gamma\le 1$.)

Define
\[
f(y):=|y|^{-n/p}\Bigl(\log\tfrac{e}{|y|}\Bigr)^{-\beta}\mathbf 1_{|y|<1}(y),
\qquad
g(y):=|y|^{-n/q}\Bigl(\log\tfrac{e}{|y|}\Bigr)^{-\gamma}\mathbf 1_{|y|<1}(y).
\]
Then $f\in L^p$ and $g\in L^q$. Evaluating at $x=0$ and using
\eqref{eq:Gs-lower-critical},
\begin{align*}
\mathcal{J}_s(f,g)(0)
&=\int_{\mathbb R^n} G_s(y)\,f(-y)\,g(y)\,dy \\
&\ge c\int_{|y|<1}
|y|^{s-n}\,|y|^{-n/p}\,|y|^{-n/q}
\Bigl(\log\tfrac{e}{|y|}\Bigr)^{-(\beta+\gamma)}\,dy.
\end{align*}
Using \eqref{eq:critical-line}, we have
$s-n-n/p-n/q=-n$, and so
\[
\mathcal{J}_s(f,g)(0)\ge c\int_{|y|<1}
|y|^{-n}\Bigl(\log\tfrac{e}{|y|}\Bigr)^{-(\beta+\gamma)}\,dy.
\]
In polar coordinates the expression is comparable to
\[
\int_0^1 \frac{dr}{r\bigl(\log(e/r)\bigr)^{\beta+\gamma}},
\]
which diverges since $\beta+\gamma\le 1$. Hence $\mathcal{J}_s(f,g)(0)=\infty$,
and we can conclude that $\mathcal{J}_s(f,g)\notin L^\infty$.

\medskip
\noindent\textbf{Case 2: $(p,q)=(n/s,\infty)$.}
Suppose, for the sake of a contradiction that
\[
\|\mathcal{J}_s(f,g)\|_{L^\infty}\le C\|f\|_{L^{n/s}}\|g\|_{L^\infty}.
\]
Taking $g\equiv 1$ gives
\begin{equation}\label{eq:crit-linear-assumed}
\|G_s*f\|_{L^\infty}\le C\|f\|_{L^{n/s}}.
\end{equation}
Fix $0<\rho<1/2$ and set
\[
f(y):=|y|^{-s}\Bigl(\log\tfrac{e}{|y|}\Bigr)^{-1}\mathbf 1_{|y|<\rho}(y).
\]
A direct polar-coordinate computation shows $f\in L^{n/s}$. On the other hand,
by \eqref{eq:Gs-lower-critical},
\begin{align*}
(G_s*f)(0)
&\ge c\int_{|y|<\rho} |y|^{s-n}\,|y|^{-s}\Bigl(\log\tfrac{e}{|y|}\Bigr)^{-1}\,dy \\
&= c\int_{|y|<\rho} |y|^{-n}\Bigl(\log\tfrac{e}{|y|}\Bigr)^{-1}\,dy
\sim \int_0^\rho \frac{dr}{r\log(e/r)}=\infty,
\end{align*}
contradicting \eqref{eq:crit-linear-assumed}. Thus the endpoint boundedness fails.

\medskip
\noindent\textbf{Case 3: $(p,q)=(\infty,n/s)$.}
The contradiction follows by symmetry, i.e. by swapping the roles of $f$ and $g$.\end{proof}

\subsection{Sharp Lorentz indices}
Next, we prove fractional index sharpness, in the cases given by the following lemma.

\begin{lemma}[Sharpness of fractional index bounds] 
Let $0<s<n.$ For $1\leq p,q\leq \infty,$ let 
\begin{equation}\frac{1}{r}=\frac{1}{p}+\frac{1}{q}-\frac{s}{n}.\end{equation}
The following hold: 
\begin{enumerate}
    \item \label{item:bessel-inside} Let $1<p,q<\infty, $ 
    Then for every $0<\alpha,$ $\frac{1}{\alpha}>\frac{1}{p}+\frac{1}{q},$ 
    \[
    \mathcal{J}_s:L^p(\mathbb{R}^n)\times L^q(\mathbb{R}^n)\not\to L^{r,\alpha}(\mathbb{R}^n).
    \]
    The bound $\frac{1}{\alpha}=\frac{1}{p}+\frac{1}{q}$ is sharp in Lemma~\ref{lem:three-point-geometry}.
    \item \label{item:bessel-endpoints} Let $1<p<\frac{n}{s}$, and fix $q\in\{1,\infty\}$. Define $r$ by
    \begin{equation*}\label{eq:r-def-bessel}
    \frac{1}{r}=\frac1p+\frac1q-\frac{s}{n}.
    \end{equation*}
    Then for every $0<\alpha<p,$ 
    \[
    \mathcal J_s:L^p(\mathbb{R}^n)\times L^q(\mathbb{R}^n)\not\to L^{r,\alpha}(\mathbb R^n).
    \]
    In particular, the Lorentz index restrictions $\alpha\ge p$ in the cases
    $q=\infty$ (Lemma~\ref{lem:LPLINFTY}) and $q=1$ (Lemma~\ref{lem:L1LQ}) are sharp.
    \item \label{item:bessel-enpoint-bi} Let $p=1,$ $q=\infty.$ Then for any $0<\alpha<\infty,$
    \[
    \mathcal J_s:L^1(\mathbb{R}^n)\times L^\infty(\mathbb{R}^n)\not\to L^{\frac{n}{n-s},\alpha}(\mathbb R^n)
    \]
\end{enumerate}

\end{lemma}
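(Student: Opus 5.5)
The plan is to build explicit functions with logarithmically corrected power singularities at the origin. For these, $\mathcal J_s(f,g)$ behaves like $|x|^{-n/r}$ times a logarithmic factor near $0$. Membership of such a function in $L^{r,\alpha}$ is then decided by one integral. If $h(x)\gtrsim |x|^{-n/r}(\log\frac{e}{|x|})^{-\kappa}$ for $|x|<c$, then
\[
h^*(t)\gtrsim t^{-1/r}\bigl(\log\tfrac1t\bigr)^{-\kappa}\quad (t \text{ small}),
\qquad\text{so}\qquad
\|h\|_{L^{r,\alpha}}^\alpha\gtrsim\int_0^{t_1}\bigl(\log\tfrac1t\bigr)^{-\alpha\kappa}\,\frac{dt}{t}=\infty
\]
whenever $\alpha\kappa\le 1$. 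This divergence criterion is the common engine for all three items. Throughout we use $G_s\ge0$ and the lower bound $G_s(y)\ge c|y|^{s-n}$ for $|y|<1$, as in the proof of Proposition~\ref{prop:fail-critical-Linfty}.

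\textbf{Item 1.} Set
\[
f(y)=|y|^{-n/p}\bigl(\log\tfrac{e}{|y|}\bigr)^{-\beta}\mathbf 1_{|y|<1},\qquad
g(y)=|y|^{-n/q}\bigl(\log\tfrac{e}{|y|}\bigr)^{-\gamma}\mathbf 1_{|y|<1},
\]
with $\beta>1/p$ and $\gamma>1/q$, so that $f\in L^p$ and $g\in L^q$. Because $\frac1\alpha>\frac1p+\frac1q$, we can also choose $\beta+\gamma\le 1/\alpha$. Fix $|x|<1/4$ and restrict the integral to $|y|<|x|/2$. There $|x\pm y|\simeq|x|$, so $f(x-y)\gtrsim |x|^{-n/p}L(x)^{-\beta}$ and $g(x+y)\gtrsim|x|^{-n/q}L(x)^{-\gamma}$, where $L(x)=\log(e/|x|)$. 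Integrating $|y|^{s-n}$ over this ball contributes $|x|^s$. Hence
\[
\mathcal J_s(f,g)(x)\gtrsim |x|^{-n/r}L(x)^{-(\beta+\gamma)},
\]
and the criterion applies with $\kappa=\beta+\gamma$. The same example shows that $\frac1\alpha=\frac1p+\frac1q$ cannot be improved in Lemma~\ref{lem:three-point-geometry}.

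\textbf{Item 2, case $q=\infty$.} Take $g\equiv1$, so that $\mathcal J_s(f,1)=G_s*f$. Use $f$ as above with $1/p<\beta\le 1/\alpha$, which is possible since $\alpha<p$. The same local computation gives $G_s*f(x)\gtrsim|x|^{s-n/p}L(x)^{-\beta}$, and the criterion applies with $\kappa=\beta$.

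\textbf{Item 2, case $q=1$.} A fixed log-singular $g\in L^1$ requires $\gamma>1$, which is too costly. Instead I would argue by contradiction against a uniform bound, using $g_\delta=|B(0,\delta)|^{-1}\mathbf 1_{B(0,\delta)}$, which satisfies $\|g_\delta\|_{L^1}=1$. Substituting $y=-x+z$ gives
\[
\mathcal J_s(f,g_\delta)(x)=\fint_{B(0,\delta)}G_s(z-x)f(2x-z)\,dz\longrightarrow G_s(x)f(2x)
\]
for every $x\neq0$, since both functions are continuous away from $0$. With $f$ as above and $1/p<\beta\le1/\alpha$, the limit satisfies $G_s(x)f(2x)\gtrsim |x|^{s-n-n/p}L(x)^{-\beta}=|x|^{-n/r}L(x)^{-\beta}$ near $0$, so its $L^{r,\alpha}$ quasi-norm is infinite. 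The Fatou property of Lorentz quasi-norms under a.e. convergence then gives $\liminf_\delta\|\mathcal J_s(f,g_\delta)\|_{L^{r,\alpha}}=\infty$. This contradicts the assumed bound $C\|f\|_{L^p}$. The Fatou property follows from lower semicontinuity of $d_h$ together with \eqref{eq:distr-def-lorentz} and Fatou's lemma.

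\textbf{Item 3.} Again take $g\equiv1$ and $f_\delta=|B(0,\delta)|^{-1}\mathbf 1_{B(0,\delta)}$. Then $G_s*f_\delta\to G_s$ pointwise on $\mathbb R^n\setminus\{0\}$. But $G_s\gtrsim|x|^{s-n}=|x|^{-n/r}$ near $0$ with $r=\frac{n}{n-s}$, which is the criterion with $\kappa=0$. So $\|G_s\|_{L^{r,\alpha}}=\infty$ for every $\alpha<\infty$, and Fatou contradicts the bound with $\|f_\delta\|_{L^1}=1$.

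The main obstacle I expect is making the pointwise lower bounds rigorous uniformly near the origin, including the passage from a pointwise bound to a bound on $h^*$, and justifying the Fatou step in the approximate-identity arguments. The log-exponent bookkeeping itself is routine.
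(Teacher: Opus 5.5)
Your proposal is correct. For items 1, 2 with $q=\infty$, and 3 it is essentially the paper's argument. You use log-corrected power singularities, obtain a pointwise bound $\mathcal J_s(f,g)(x)\gtrsim|x|^{-n/r}\log(e/|x|)^{-\kappa}$ near the origin, and use the fact that such a function is not in $L^{r,\alpha}$ when $\alpha\kappa\le 1$ (the paper cites Exercise~1.4.8 of Grafakos for this). The differences are minor. You integrate over the ball $|y|<|x|/2$ around the kernel singularity, where $f(x-y)$ and $g(x+y)$ are both comparable to their values at $x$. The paper instead changes variables and integrates over the annulus $|x|<|y|<\frac32|x|$. In item 3 you pass to the limit $G_s*f_\delta\to G_s$ and use the Fatou property of $L^{r,\alpha}$. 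The paper instead bounds $G_s*f_\varepsilon\gtrsim|x|^{s-n}$ on $4\varepsilon\le|x|\le\frac12$ and gets explicit growth $\gtrsim\log\frac1\varepsilon$ of the $\alpha$-th power of the quasi-norm. Your version is shorter; the paper's is quantitative.

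The genuinely different step is item 2 with $q=1$, and there your route is the one that actually works.

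\textbf{Why the paper's $q=1$ argument fails.} The paper uses the same fixed $g=\mathbf 1_{B(0,4)}$ for $q=1$ as for $q=\infty$ and asserts $\mathcal J_s(f,g)\gtrsim h$ ``by the same reasoning''. But the integral over the annulus only yields $|x|^{s-n/p}\log(e/|x|)^{-1/\alpha}$. This has the right power $-n/r$ when $q=\infty$. When $q=1$, however, $-\frac nr=s-\frac np-n$, so the bound is short by a factor $|x|^{-n}$. In fact, for that pair, $\mathcal J_s(f,g)\le J_s f$ is supported in $B(0,\frac52)$ and lies in $L^{r_*,\infty}$ with $\frac1{r_*}=\frac1p-\frac sn=\frac1r-1$. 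Hence it belongs to every $L^{r,\alpha}$, so the paper's example does not witness the $q=1$ claim.

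\textbf{Why your argument works.} You correctly note that a fixed log-singular $g\in L^1$ forces a log exponent $\gamma>1$, which is too costly. Replacing it with the normalized approximate identity $g_\delta=|B(0,\delta)|^{-1}\mathbf 1_{B(0,\delta)}$ concentrates unit $L^1$ mass at the origin with no logarithmic loss. The limit $G_s(x)f(2x)\gtrsim|x|^{-n/r}\log(e/|x|)^{-\beta}$ then has exactly the right singularity. Combined with the Fatou property, this yields unboundedness rather than a single pair with $\mathcal J_s(f,g)\notin L^{r,\alpha}$; this is the same form of conclusion the paper itself proves in item 3.

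Two small remarks. First, the pointwise convergence $\mathcal J_s(f,g_\delta)(x)\to G_s(x)f(2x)$ holds for $x\neq0$ with $|2x|\neq1$, since the cutoff in $f$ is discontinuous on $|y|=1$. That set is null, so a.e.\ convergence is all you need. Second, your $q=1$ argument does not use the hypothesis $p<\frac ns$.
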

\begin{proof} 
For $0<p,q,r<\infty,$ $\beta,\delta>0,$ we define
   \begin{gather}
    f(x)\coloneq |x|^{-\frac{n}{p}}\log\left(\frac{e}{|x|}\right)^{-\frac{1}{\beta}}\mathbf{1}_{B(0,1)\setminus \{0\}}(x); \label{eq:lp-function}\\
    h(x)\coloneq |x|^{-\frac{n}{r}}\log\left(\frac{e}{|x|}\right)^{-\frac{1}{\delta}}\mathbf{1}_{B(0,\frac{1}{8})\setminus\{0\}}(x).\label{eq:lr-function}
    \end{gather}

For $x\in \mathbb{R}^n$ we use $B(x,r)$ to refer to the ball of radius $r,$ centered at $x,$ and for a set $A,$ we use $\mathbf{1}_{A}$ to be the indicator function of the set $A,$ i.e. the function such that $\mathbf{1}_{A}(x)=1$ if $x\in A,$ and $\mathbf{1}_{A}(x)=0$ otherwise. 

\noindent\textbf{\textit{(1)} $1<p,q<\infty$:}

    Since $\frac{1}{\alpha}>\frac{1}{p}+\frac{1}{q},$ and $1<p,q< \infty,$ we can find $0<\beta, \gamma< \infty$ such that $\beta <p,$ $\gamma <q,$ and $\frac{1}{\alpha}=\frac{1}{\beta}+\frac{1}{\gamma}.$  Let $f$ be as in \ref{eq:lp-function}, and let $h$ be as in \eqref{eq:lr-function}, with $\delta = \alpha.$
    Let \[
     g(x)\coloneq |x|^{-\frac{n}{q}}\log\left(\frac{e}{|x|}\right)^{-\frac{1}{\gamma}} \mathbf{1}_{B(0,1)\setminus\{0\}}(x); 
    \]

Since $\beta<p$ and $\gamma<q$, it follows that $f\in L^p = L^{p,p}$ and $g\in L^q = L^{q,q}$. However, because
\[
\frac{1}{\beta}+\frac{1}{\gamma}=\frac{1}{\alpha},
\]
the function $h$ does not belong to $L^{r,\alpha}$ (see Exercise~1.4.8 in \cite{Gra}).

Let $|x|<\frac{1}{8},$ $x\neq 0.$ The bilinear Bessel potential is given by 
\[
\mathcal{J}_s(f,g)(x)=\int_{\mathbb{R}^n}G_s(y)f(x-y)g(x+y)dy.
\]

Using a change of variables, we have 
\[
\mathcal{J}_s(f,g)(x)=\int_{\mathbb{R}^n}G_s(x-y)f(y)g(2x-y)dy.
\]

Since the integrand is nonnegative, we restrict to the region 
\begin{equation} \label{eq:set-e-x}
E_x\coloneq \{y\in \mathbb{R}^n: |x|<|y|<\frac{3}{2}|x|\},
\end{equation}
where $|x|<\frac{1}{8}.$

Note that for $y\in E_x$ and $|x|<\frac{1}{8},$
\begin{gather*}
|x-y|, |x+y|\leq |x|+|y|< 1;\\
|2x-y|\leq 2|x|+|y|< 1.
\end{gather*}

Therefore, 

\[
\mathcal{J}_s(f,g)(x)\gtrsim \int_{E_x}|x-y|^{s-n}|y|^{-\frac{n}{p}}\log^{-\frac{1}{\beta}}\left(\frac{e}{|y|}\right)|2x-y|^{-\frac{n}{q}}\log^{-\frac{1}{\gamma}}\left(\frac{e}{|2x-y|}\right)dy.
\]

We observe that if $1>|z_1|\geq|z_2|,$ and $a>0,$ it follows that
\begin{equation} \label{eq:log-preserves-order}
    \log^{-\frac{1}{a}}\left(\frac{e}{|z_1|}\right)\geq \log^{-\frac{1}{a}}\left(\frac{e}{|z_2|}\right). 
\end{equation}

For $x$ such that $|x|<\frac{1}{8},$ and $y\in E_x,$ $|y|\geq |x|$ and $|2x-y|\geq 2|x|-\frac{3}{2}|x|\geq \frac{1}{2}|x|.$ Therefore, by the equation \eqref{eq:log-preserves-order}, the following hold:
\begin{gather} \label{eq:log-terms1}
    \log^{-\frac{1}{\beta}}\left(\frac{e}{|y|}\right)\geq \log^{-\frac{1}{\beta}}\left(\frac{e}{|x|}\right);\\ \label{eq:log-terms2}
    \log^{-\frac{1}{\gamma}}\left(\frac{e}{|2x-y|}\right)\geq \log^{-\frac{1}{\gamma}}\left(\frac{2e}{|x|}\right)\gtrsim \log^{-\frac{1}{\gamma}}\left(\frac{e}{|x|}\right).
\end{gather}

For $y\in E_x,$ we have
\begin{gather} \label{eq:absolute-terms1}
    |2x-y|^{-\frac{n}{q}}\gtrsim|y|^{-\frac{n}{q}};\\
    |x-y|^{s-n}\gtrsim |y|^{s-n}, \label{eq:absolute-terms2}
\end{gather}
where the implied constants depend on $n,s$ and $q.$

Combining the equations \eqref{eq:log-terms1}, \eqref{eq:log-terms2}, \eqref{eq:absolute-terms1} and \eqref{eq:absolute-terms2}, it follows that 
\begin{align*}
\mathcal{J}_s(f,g)(x)&\gtrsim \log^{-(\frac{1}{\beta}+\frac{1}{\gamma})}\left(\frac{e}{|x|}\right) \int_{E_x} |y|^{s-n(1+\frac{1}{p}+\frac{1}{q})}dy \\
&\approx \log^{-\frac{1}{\alpha}}\left(\frac{e}{|x|}\right) \int_{|x|}^{\frac32|x|} \rho^{-\frac{n}{r}-1} d\rho \\
&= \log^{-\frac{1}{\alpha}}\left(\frac{e}{|x|}\right)|x|^{-\frac{n}{r}} \frac{r}{n}\left(1-\left(\frac{3}{2}\right)^{-\frac{n}{r}}\right) \\
&\gtrsim \log^{-\frac{1}{\alpha}}\left(\frac{e}{|x|}\right)|x|^{-\frac{n}{r}},
\end{align*}
where the fact that $1>\frac{2}{3}^{\frac{n}{r}},$ implies the last inequality. 

Hence,  for $x\in \mathbb{R}^n,$ such that $x\neq 0$ and $|x|<\frac{1}{8},$ $\mathcal{J}_s(f,g)(x)\geq h(x),$ and since $h\notin L^{r,\alpha},$ it follows that $\mathcal{J}_s(f,g)\notin L^{r,\alpha}$ completing the proof in the case \ref{item:bessel-inside}.

\noindent\textbf{\textit{(2)}: $1<p<\frac{n}{s}$ and $q\in\{1,\infty\}:$}  Fix $0<\alpha<p$ and let $f$ be as in equations \eqref{eq:lp-function} and let $h$ be as in \eqref{eq:lr-function}, with $\delta=\alpha$, and let $g(x)\coloneq\mathbf 1_{B(0,4)}(x).$ Note that $g\in L^q(\mathbb R^n)$ for $q\in\{1,\infty\}$, and as before, since $\alpha<p,$  $f\in L^p= L^{p,p}.$ The function $h$ does not belong to $L^{r,\alpha}(\mathbb R^n)$ whenever
$0<\alpha<p$ (see Exercise~1.4.8 in \cite{Gra}).

The proof now follows similarly to the proof of item i). Namely, we observe that for $|x|<\frac{1}{8},$ and $y\in E_x$ defined as in \eqref{eq:set-e-x}, $|2x-y|<1<4,$
and the following inequality holds:
\begin{align*}
\mathcal{J}_s(f,g)(x)&=\int_{\mathbb{R}^n}G_s(x-y)f(y)g(2x-y)dy \\
&\gtrsim  \int_{E_x}|x-y|^{s-n}|y|^{-\frac{n}{p}}\log^{-\frac{1}{\alpha}}\left(\frac{e}{|y|}\right)dy.\end{align*}
Following the same reasoning as in item \ref{item:bessel-inside}, it follows that 
\[
\mathcal{J}_s(f,g)(x) \gtrsim h(x) \quad \text{for } x\in B(0,1/8)\setminus\{0\}.
\]
Therefore, $\mathcal{J}_{s}(f,g)\notin L^{r,\alpha}(\mathbb{R}^n).$

\noindent\textbf{\textit{(3)}: $p=1,\ q=\infty$.}
We argue by contradiction. Assume that for some $0<\alpha<\infty$,
\[
\mathcal J_s:
L^1(\mathbb R^n)\times L^\infty(\mathbb R^n)
\longrightarrow
L^{\frac{n}{n-s},\alpha}(\mathbb R^n)
\]
is bounded. Let $g\equiv 1$ and, for $\varepsilon>0$, define
\[
f_\varepsilon \coloneq \frac{\mathbf 1_{B(0,\varepsilon)}}{|B(0,\varepsilon)|}.
\]
Then $\|f_\varepsilon\|_{L^1}=1$ and $\|g\|_{L^\infty}=1$, so by assumption,
\begin{equation}\label{eq:bessel-bound-3}
\|\mathcal J_s(f_\varepsilon,1)\|_{L^{\frac{n}{n-s},\alpha}}
\lesssim 1,
\qquad\text{uniformly in }\varepsilon.
\end{equation}

For all $x\in\mathbb R^n$,
\[
\mathcal J_s(f_\varepsilon,1)(x)
=\int_{\mathbb R^n} G_s(y) f_\varepsilon(x-y)\,dy
=(G_s*f_\varepsilon)(x).
\]
Fix $0<\varepsilon<\tfrac18$ and consider $x$ such that
$4\varepsilon\le |x|\le \tfrac12$.
For any $y\in B(0,\varepsilon)$ we have
\[
|x-y|\le |x|+\varepsilon \le \tfrac54 |x|,
\qquad
|x-y|\ge |x|-\varepsilon \ge \tfrac34 |x|.
\]
Since $|x-y|\le 1$ in this region and the Bessel kernel satisfies
$G_s(z)\gtrsim |z|^{s-n}$ for $|z|\le 1$, it follows that
\[
(G_s*f_\varepsilon)(x)
=\frac1{|B(0,\varepsilon)|}\int_{B(0,\varepsilon)} G_s(x-y)\,dy
\gtrsim
\frac1{|B(0,\varepsilon)|}\int_{B(0,\varepsilon)} |x-y|^{s-n}\,dy
\gtrsim |x|^{s-n}.
\]
Hence,
\[
\mathcal J_s(f_\varepsilon,1)(x)
\gtrsim |x|^{s-n}\,
\mathbf 1_{\{\,4\varepsilon\le |x|\le \frac12\,\}}(x).
\]

By monotonicity of Lorentz quasi-norms with respect to pointwise domination,
\[
\|\mathcal J_s(f_\varepsilon,1)\|_{L^{\frac{n}{n-s},\alpha}}
\gtrsim
\bigl\||x|^{s-n}\mathbf 1_{\{\,4\varepsilon\le |x|\le \frac12\,\}}\bigr\|_
{L^{\frac{n}{n-s},\alpha}}.
\]

Let $r=\frac{n}{n-s}$. For $\lambda>0$, define
\[
E_\lambda
=
\Bigl\{x:\ 4\varepsilon\le |x|\le \tfrac12,\ |x|^{s-n}>\lambda\Bigr\}.
\]
For $\lambda\in[2^{\,n-s},(8\varepsilon)^{-(n-s)}]$
(equivalently $\lambda^{-1/(n-s)}\in[8\varepsilon,\tfrac12]$),
we have
\[
E_\lambda
=
\Bigl\{x:\ 4\varepsilon\le |x|<\lambda^{-1/(n-s)}\Bigr\},
\]
and hence
\[
|E_\lambda|
=
c_n\bigl(\lambda^{-r}-(4\varepsilon)^n\bigr)
\gtrsim \lambda^{-r},
\]
with constants independent of $\varepsilon$.
Indeed, for $\lambda \le (8\varepsilon)^{-(n-s)}$ one has
$\lambda^{-1/(n-s)} \ge 8\varepsilon$, hence $\lambda^{-r} \ge (8\varepsilon)^n$.
Therefore
\[
\lambda^{-r}-(4\varepsilon)^n
\ge (8\varepsilon)^n-(4\varepsilon)^n
=(1-2^{-n})(8\varepsilon)^n
\ge (1-2^{-n})\,\lambda^{-r},
\]
which shows that $|E_\lambda|\gtrsim \lambda^{-r}$ with constants independent of
$\varepsilon$.
Using the definition of the Lorentz quasi-norm,
\[
\bigl\||x|^{s-n}\mathbf 1_{\{\,4\varepsilon\le |x|\le \frac12\,\}}\bigr\|_
{L^{r,\alpha}}^{\alpha}
\gtrsim
\int_{2^{\,n-s}}^{(8\varepsilon)^{-(n-s)}}
\Bigl(\lambda\,|E_\lambda|^{1/r}\Bigr)^{\alpha}\,\frac{d\lambda}{\lambda}
\gtrsim
\int_{2^{\,n-s}}^{(8\varepsilon)^{-(n-s)}} \frac{d\lambda}{\lambda}
\sim \log\frac1\varepsilon.
\]
Letting $\varepsilon\to0$ yields
\[
\bigl\||x|^{s-n}\mathbf 1_{\{\,4\varepsilon\le |x|\le \frac12\,\}}\bigr\|_
{L^{r,\alpha}}
\longrightarrow \infty,
\]
which contradicts the assumed uniform bound in equation \eqref{eq:bessel-bound-3}. This completes
the proof.
 \end{proof}

\section{Geometry of the Exponent Region}

In this section we provide a schematic picture in the $(1/p,1/q,1/r)$-space. Since $p$ and $q$
are fixed, the interpolation segment is vertical (only $1/r$ changes).
\begin{center}
\begin{tikzpicture}[
    scale=1.8,
    x={(1cm,0cm)},      
    y={(0.4cm,0.7cm)},  
    z={(0cm,1cm)}       
]

\def\sn{0.4} 

\coordinate (O) at (0,0,0);
\coordinate (X) at (1.3,0,0);
\coordinate (Y) at (0,-1.3,0);
\coordinate (Z) at (0,0,1.8);

\draw[->,line width=0.5pt] (O)--(X) node[below] {$\frac{1}{q}$};
\draw[->,line width=0.5pt] (O)--(Y) node[left]  {$\frac{1}{p}$};
\draw[->,line width=0.5pt] (O)--(Z) node[left]  {$\frac{1}{r}$};

\draw (1,0,0) -- (1,0,-0.05);
\node[below] at (1,0,0) {$1$};

\draw (0,-1,0) -- (-0.05,-1,0);
\node[left] at (0,-1,0) {$1$};

\draw (0,0,1) -- (-0.05,0,1);
\node[left] at (0,0,1) {$1$};

\draw (\sn,0,0) -- (\sn,0,-0.05);
\node[below] at (\sn,0,0) {$\tfrac{s}{n}$};

\draw (0,-\sn,0) -- (-0.05,-\sn,0);
\node[left] at (0,-\sn,0) {$\tfrac{s}{n}$};


\coordinate (P0)  at (0,0,0);  
\coordinate (P01) at (0,-1,1);  
\coordinate (P10) at (1,0,1);  
\coordinate (P11) at (1,-1,2);  

\coordinate (Q1) at (0,-\sn,0);        
\coordinate (Q2) at (\sn,0,0);        
\coordinate (Q3) at (0,-1,1-\sn);      
\coordinate (Q4) at (1,0,1-\sn);      
\coordinate (Q5) at (1,-1,2-\sn);      

\filldraw[
  fill=blue!20,
  fill opacity=0.15,
  draw=red!0
]
(Q1) -- (Q2) -- (Q4) -- (Q5) -- (Q3) -- cycle;

\filldraw[
  fill=blue!20,
  draw=blue,
  opacity=0.7,
  line width=0.8pt
]
(P0) -- (P01) -- (P11) -- (P10) -- cycle;

\filldraw[
  fill=blue!20,
  draw=blue,
  opacity=0.7,
  line width=0.8pt
]
(Q1) -- (P0) -- (P01) -- (Q3) -- cycle;

\filldraw[
  fill=blue!20,
  draw=blue,
  opacity=0.7,
  line width=0.8pt
]
(Q5) -- (P11) -- (P01) -- (Q3) -- cycle;

\filldraw[
  fill=blue!20,
  draw=blue,
  opacity=0.7,
  line width=0.8pt
]
(Q5) -- (P11) -- (P10) -- (Q4) -- cycle;

\filldraw[
  fill=blue!20,
  draw=blue,
  opacity=0.7,
  line width=0.8pt
]
(Q2) -- (Q4) -- (P10) -- (P0) -- cycle;

\filldraw[
  fill=blue!20,
  draw=blue!0,
  opacity=0.7,
  line width=0.8pt
]
(P0) -- (Q1) -- (Q2) -- cycle;

\draw[blue, line width=0.8pt] (Q1) -- (Q3);
\draw[blue, line width=0.8pt] (Q4) -- (Q2);
\draw[red, line width=0.8pt] (Q4) -- (Q5);
\draw[red, line width=0.8pt] (Q3) -- (Q5);
\draw[black, dashed] (Q1) -- (Q2);
\draw[blue, line width=0.8pt] (Q1) -- (P0);
\draw[blue, line width=0.8pt] (Q2) -- (P0);
\foreach \P in {P0,P01,P10,P11}
    \fill[blue] (\P) circle (0.03);
\foreach \Q in {Q1,Q2}
    \draw[black] (\Q) circle (0.03);
\foreach \Q in {Q3,Q4,Q5}
    \fill[red] (\Q) circle (0.03);

\end{tikzpicture}
\end{center}

This picture depicts the region of the exponents $(1/p,1/q,1/r)$ for which boundedness holds. The \emph{blue solid region} indicates
the part of the exponent region where strong-type boundedness is established in
this paper, while the \emph{red region} indicates boundary portions where weak-type
boundedness is established. Dashed lines and hollow circles indicate endpoint or
boundary behavior where boundedness fails or is excluded by the counterexamples proved above.

\textbf{Acknowledgements.} The authors would like to thank Professor Loukas Grafakos for useful conversations throughout the process of writing the paper.

\Addresses
\end{document}